\crefname{equation}{}{}
\let\originalleft\left
\let\originalright\right
\renewcommand{\left}{\mathopen{}\mathclose\bgroup\originalleft}
\renewcommand{\right}{\aftergroup\egroup\originalright}
\let\originalbig\big
\renewcommand{\big}{\mathopen{}\mathclose\originalbig}
\crefname{equation}{}{} 
\colorlet{refkey}{orange!20}
\colorlet{labelkey}{blue!30}
\crefname{algocf}{Algorithm}{Algorithms}
\numberwithin{equation}{section}
\newtheorem{theorem}{Theorem}[section]
\newtheorem{proposition}[theorem]{Proposition}
\newtheorem{lemma}[theorem]{Lemma}
\newtheorem{claim}[theorem]{Claim}
\crefname{claim}{Claim}{Claims}
\newtheorem{corollary}[theorem]{Corollary}
\newtheorem*{question*}{Question}
\newtheorem{fact}[theorem]{Fact}
\theoremstyle{definition}
\newtheorem{definition}[theorem]{Definition}
\newtheorem*{definition*}{Definition}
\newtheorem*{theorem*}{Theorem}
\theoremstyle{remark}
\newtheorem*{remark*}{Remark}
\newcommand{\mb}{\mathbb}
\newcommand{\mc}{\mathcal}
\newcommand{\mr}{\mathrm}
\newcommand{\on}{\operatorname}
\newcommand{\N}{\mathbb{N}}
\newcommand{\eps}{\varepsilon}
\newcommand{\E}{\mathbb{E}}
\renewcommand{\Pr}{\mb P}
\newcommand{\su}{\subseteq}
\newcommand{\hide}[1]{}
\title{Exponential anticoncentration of the permanent}
\author[Hunter]{Zach Hunter}
\address{Department of Mathematics, ETH Z\"urich, Switzerland.}
\email{zach.hunter@ifor.math.ethz.ch}
\author[Kwan]{Matthew Kwan}
\address{Institute of Science and Technology Austria (ISTA).}
\email{matthew.kwan@ist.ac.at}
\author[Sauermann]{Lisa Sauermann}
\address{Institute for Applied Mathematics, University of Bonn, Germany.}
\email{sauermann@iam.uni-bonn.de}
\thanks{
Zach Hunter was supported by SNSF grant 200021-228014. Matthew Kwan was supported by ERC Starting Grant ``RANDSTRUCT'' No.\ 101076777. Lisa Sauermann was supported by the
DFG Heisenberg Program. 
}
\begin{document}

\maketitle
\begin{abstract}
Let $A\in\mathbb{R}^{n\times n}$ be a random matrix with independent
entries, and suppose that the entries are ``uniformly anticoncentrated''
in the sense that there is a constant $\eps>0$ such that each
entry $a_{ij}$ satisfies $\sup_{z}\Pr[a_{ij}=z]\le1-\eps$
(for example, $A$ could be a uniformly random $n\times n$ matrix
with $\pm1$ entries). Significantly improving previous bounds of Tao and Vu, we prove that the permanent of $A$ is exponentially anticoncentrated: there is $c_{\eps}>0$ such that $\sup_{z}\Pr[\operatorname{per}(A)=z]\le\exp(-c_{\eps}n)$. Our proof also
works for the determinant, giving an alternative proof of a classical
theorem of Kahn, Koml\'os and Szemer\'edi. As a consequence, we see that there are at least exponentially many
different permanents of $n\times n$ matrices with $\pm1$ entries,
resolving a problem of Ingram and Razborov.
\end{abstract}

\section{Introduction}

Two important matrix parameters are the \emph{determinant}
and \emph{permanent}: for an $n\times n$ matrix $A=(a_{ij})$, 
they are defined as
\begin{equation}
\det(A)=\sum_{\pi\in S_{n}}\operatorname{sign}(\pi)\prod_{i=1}^{n}a_{i\pi(i)}\qquad\text{and}\qquad\operatorname{per}(A)=\sum_{\pi\in S_{n}}\prod_{i=1}^{n}a_{i\pi(i)}.\label{eq:det-per}
\end{equation}
Determinants are omnipresent throughout mathematics and the sciences. Permanents are of fundamental importance in combinatorics and computational complexity theory, and also play an important role in quantum physics and statistics (see for example the books and surveys \cite{ABbook,vLW01,Minbook,Sch04,MR21}).

In combinatorics, computer science and random matrix theory, it is of great interest to understand statistical properties of determinants and permanents, among various classes of discrete matrices. 
In particular, one of the most intensive directions of
research is the study of the singularity probability of random sign matrices. Let $A_{n}$
be a uniformly random $n\times n$ matrix with $\pm1$ entries; what
is the probability that $\det(A_{n})=0$? Perhaps the foundational
theorem in combinatorial random matrix theory, due to Koml\'os~\cite{Kom67},
is that $\Pr[\det(A_{n})=0]=o(1)$ as $n\to\infty$. A subsequent
breakthrough by Kahn, Koml\'os and Szemer\'edi~\cite{KKS95} (simplified
by Tao and Vu~\cite{TV06b}) showed that the singularity probability
is in fact \emph{exponentially small}: that is to say, $\Pr[\det(A_{n})=0]=\exp(-cn)$
for some constant $c>0$. Quantitative aspects were later improved
by Tao and Vu~\cite{TV07} and by Bourgain, Vu and Wood~\cite{BVW10},
culminating in a celebrated result of Tikhomirov~\cite{Tik20} proving the essentially optimal bound $\Pr[\det(A_{n})=0]\le(1/2+o(1))^{n}$.

It is reasonable to expect similar results for the permanent. In fact one may expect that \emph{even stronger} bounds
should hold; for example, Vu (see \cite[Conjecture~6.12]{Vu20} and
\cite{Vu09}) conjectured that $\Pr[\on{per}(A_{n})=0]$ decays faster than exponentially. 
However, the permanent is much more difficult to study than the determinant,
and much less is known: until now, the only theorem in this
direction is due to Tao and Vu~\cite{TV09}, who proved that
\[
\Pr[\on{per}(A_{n})=0]\le n^{-c}
\]
for some constant $c>0$ 
(see also \cite{TV08} for an earlier, simpler version of their argument that
gives a bound of shape $\exp(-c\sqrt{\log n})$). Although Tao and Vu did not attempt to optimise the constant $c$, they note that their argument cannot hope to obtain a stronger bound than about $1/\sqrt n$, owing to their use of the so-called \emph{Erd\H os--Littlewood--Offord inequality} (they write that ``in principle, one can obtain better results by using more advanced Littlewood-Offord inequalities, but it is not clear to the authors how to restructure the rest of the argument so that such inequalities can be exploited'').

In this paper, we overcome this barrier, obtaining the first exponential bound.
\begin{theorem}\label{thm:baby-main}
There is an absolute constant $c>0$ such that the following holds. Let $A_n$ be a uniformly random $n\times n$ matrix with $\pm1$ entries. Then, for any $x\in \mb R$,
\[\Pr[\on{per}(A_n)=x]\le \exp(-c n).\]
\end{theorem}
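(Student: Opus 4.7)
The plan is a strong induction on $n$, combining a cofactor expansion of the permanent with anticoncentration inequalities of Hal\'asz type. Writing $C_j = \on{per}(A_n^{(n,j)})$ for the $(n,j)$-cofactor (obtained by deleting row $n$ and column $j$), the expansion
\[
\on{per}(A_n) = \sum_{j=1}^n a_{nj} C_j
\]
shows that, after conditioning on the first $n-1$ rows, the permanent is a $\pm 1$-linear combination of $(C_1,\dots,C_n)$ with independent coefficients. It therefore suffices to bound the conditional anticoncentration $\sup_x \Pr[\sum_j a_{nj} C_j = x \mid (C_j)]$ and to show that $(C_j)$ falls into the ``good'' case with probability $1 - \exp(-\Omega(n))$ over the first $n-1$ rows.

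The good case is that the vector $(C_1,\dots,C_n)$ is unstructured in a quantitative sense---for instance, that its additive energies $E_k(C)=|\{(j_1,\dots,j_{2k}):C_{j_1}+\dots+C_{j_k}=C_{j_{k+1}}+\dots+C_{j_{2k}}\}|$ are much smaller than $n^{2k-1}$ for some slowly growing $k$, or equivalently that it does not concentrate on a generalised arithmetic progression of bounded dimension and size $\exp(O(n))$. In this case, a Hal\'asz-type bound yields the conditional anticoncentration $\exp(-\Omega(n))$. The bad, structured case is classified by the inverse Littlewood--Offord theorem, and to handle it we expand one level deeper: each $C_j$ is itself a linear combination of the entries of row $n-1$, with coefficients given by second-order sub-permanents $C_{j,j'} = \on{per}(A_n^{(n-1,n \mid j,j')})$ of size $(n-2)\times(n-2)$. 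The additive relations among the $C_j$'s that characterise the structured case then become linear equations in row $n-1$ whose coefficient vectors combine the $C_{j,j'}$; by the induction hypothesis applied at scale $n-2$, each such equation is satisfied with probability at most $\exp(-\Omega(n))$, and a union bound over the at most $\exp(O(n))$ structured configurations closes the argument.

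The main obstacle will be calibrating the three layers---Hal\'asz for the last row, the inverse theorem to enumerate the structured possibilities for $(C_j)$, and the induction applied to $(n-2)\times(n-2)$ sub-permanents---so that the exponential rates combine favourably and genuinely surpass the $1/\sqrt{n}$ barrier inherent in basic Erd\H os--Littlewood--Offord, which is the obstruction identified by Tao and Vu. A delicate point is that distinct structured regimes must be treated uniformly: some correspond to cofactors repeating a single value exponentially often, others to cofactors lying on a higher-dimensional GAP, and the induction must cover all of them. Crucially, the argument uses only cofactor expansions and the inductive hypothesis on sub-permanents (or sub-determinants), so it applies verbatim with $\on{per}$ replaced by $\det$, recovering the Kahn--Koml\'os--Szemer\'edi bound; this is precisely because we do \emph{not} rely on the linear algebraic identities specific to the determinant (such as Odlyzko's subspace lemma), which is what enables the approach to cover the permanent in the first place.
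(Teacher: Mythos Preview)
Your outline identifies the right starting decomposition (cofactor expansion along the last row) and the right general shape (Hal\'asz plus induction), but what you yourself flag as ``the main obstacle'' is a genuine gap, and nothing in the proposal closes it. The crux is the structured case. You write that the additive relations characterising a GAP structure on $(C_1,\dots,C_n)$ ``become linear equations in row $n-1$ whose coefficient vectors combine the $C_{j,j'}$; by the induction hypothesis applied at scale $n-2$, each such equation is satisfied with probability at most $\exp(-\Omega(n))$''. But the induction hypothesis only bounds $\Pr[\on{per}(M)=x]$ for a single random $(n-2)\times(n-2)$ matrix $M$. The equations you actually need are of the form $\sum_{j'} a_{n-1,j'}\,v_{j'}=0$ where each $v_{j'}$ is a fixed $\pm 1$ combination of several of the (highly correlated) sub-permanents $C_{j,j'}$; to get $\exp(-\Omega(n))$ anticoncentration over row $n-1$ you would need the vector $(v_{j'})_{j'}$ itself to be unstructured---which is exactly the same problem one level down. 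Two layers of expansion do not break this circularity, and nothing here moves past the $1/\sqrt{n}$ barrier. Separately, the count ``at most $\exp(O(n))$ structured configurations'' is unjustified: without the hyperplane interpretation of the cofactor vector (which is what Kahn--Koml\'os--Szemer\'edi exploit for the determinant and which you correctly disclaim), there is no evident way to enumerate GAPs containing integers of magnitude up to $(n-1)!$ in only $\exp(O(n))$ ways.

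The paper's proof bypasses the structured/unstructured dichotomy altogether. After the same cofactor expansion, it applies a \emph{relative} Hal\'asz-type inequality (derived from Kesten's theorem): for independent $X_i$ and a uniformly random $k$-subset $K$, one has $Q\big[\sum_i X_i\big]\lesssim\sqrt{k/n}\cdot\Pr\big[\sum_{i\in K}(X_i-X_i')=0\big]$. With $X_i=\xi_i\,\on{per}(A[-i])$, the right-hand side $\sum_{i\in K}(\xi_i-\xi_i')\,\on{per}(A[-i])$ is itself the permanent of a matrix of the same general type as $A$, but with one fewer \emph{random} row and one extra \emph{fixed} row (namely the row whose $i$-th entry is $(\xi_i-\xi_i')\one_{i\in K}$). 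Tracking the supremum $f_p(n)$ of $Q[\on{per}]$ over this enlarged class (random rows on top of an arbitrary fixed block containing a nonvanishing sub-permanent) gives a direct recursive inequality in $n$, with no inverse theorem, no GAP enumeration, and no union bound; iterating the recursion yields the exponential rate.
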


Here, we stated a bound for the probability that $\on{per}(A_n)=x$ for general $x$, not just for $x=0$, but this should not be viewed as a new feature of our work: it is easy to make the same generalisation with all previous work on determinants and permanents of discrete random matrices.

It is worth remarking that our proof works in exactly the same way
for the determinant instead of the permanent, so as a byproduct this
also gives a new proof of the Kahn--Koml\'os--Szemer\'edi theorem.
All previous work on determinants of random matrices fundamentally used linear-algebraic properties of the determinant; since such properties are unavailable
for the permanent, our proof is necessarily very different. In fact, our proof is also very different to the previous approach of Tao and Vu in \cite{TV09} (which was until now the only known approach to study permanents of random matrices).

\subsection{General entry distributions}

Some of the previous work on determinants and permanents of discrete random matrices readily generalises to other entry distributions. For example, shortly after Koml\'os' foundational paper showing that $\Pr[\det(A_n)=0]=o(1)$, in a second paper~\cite{Kom68} he proved that the same result holds for a random matrix with independent $\mu$-distributed entries, for any non-degenerate distribution $\mu$ (fixing $\mu$ and sending $n\to \infty $).

Bourgain, Vu and Wood~\cite{BVW10} observed that in fact one does not need to fix the entry distribution; it suffices for the entries to be independent and ``uniformly anticoncentrated'' in the sense that there is a constant $\eps>0$ such that each entry $a_{ij}$ satisfies $\sup_{z}\Pr[a_{ij}=z]\le1-\eps$. Specifically, they observed (see \cite[Theorem~1.4]{BVW10}) that some of their ideas could be combined with an approach of Tao and Vu~\cite{TV06b}, to prove that $\Pr[\det(A_n)=0]$ decays exponentially in this very general setting. Here, we prove the same statement for the permanent.

For a random variable $X$, we write $Q[X]=\sup_{x}\Pr[X=x]$ for the maximum point probability of $X$.

\begin{theorem}
\label{thm:main}For any $0<p<1$ there is $c_{p}>0$ such that the
following holds. Let $A_n\in\mb{R}^{n\times n}$ be a random matrix with independent entries, such that every entry $\xi$ of $A_n$ satisfies $Q[\xi]\le p$. 
Then
\[
Q[\on{per}(A_n)]\le\exp(-c_{p}n).
\]
\end{theorem}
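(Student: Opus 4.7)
The plan is to reveal the entries of $A_n$ in stages and chain together small multiplicative reductions of the conditional concentration function, accumulating $\Omega(n)$ factors each bounded away from $1$ by a constant depending only on $p$. The basic algebraic mechanism is the block expansion of the permanent: for any partition of the rows into $[k]$ and $\{k+1,\ldots,n\}$,
\[
\on{per}(A_n) \;=\; \sum_{S\in\binom{[n]}{k}} \on{per}\bigl(A_n[[k],S]\bigr)\cdot \on{per}\bigl(A_n[\{k+1,\ldots,n\},[n]\setminus S]\bigr).
\]
The special case $k=n-1$ gives the usual row expansion $\on{per}(A_n)=\sum_{j} a_{nj}\, M_j$, where $M_j=\on{per}(A_n^{(n,j)})$ is the permanent of the minor obtained by deleting row $n$ and column $j$; this exhibits $\on{per}(A_n)$ as a random linear combination of the entries of a single row.

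After revealing the first $n-1$ rows, the coefficients $M_j$ are fixed and a Littlewood--Offord-type bound applied to $\sum_j a_{nj} M_j$ controls $Q[\on{per}(A_n)\mid\text{first }n{-}1\text{ rows}]$ in terms of the additive structure of $(M_j)$. A single application of this only yields a polynomial gain, so the plan is to iterate: peel off rows one at a time (or in small blocks), and show that each exposure reduces the conditional concentration function by at least a constant factor $(1-c_p)$. Composing $\Omega(n)$ such gains produces the desired $\exp(-c_p n)$. Because this argument uses only the multilinearity of $\on{per}$ in each row --- never invoking any linear-algebraic structure --- exactly the same proof should apply to $\det$, giving the promised alternative route to the Kahn--Koml\'os--Szemer\'edi theorem.

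The main obstacle is controlling degeneracies of the coefficient vectors at each iteration step: if $(M_j)$ lies in a short arithmetic progression, or has only a few distinct values, then no substantial gain is obtained from that row. Since the permanent admits no linear-algebraic handle on its minors, the central ingredient should be a structural dichotomy, inspired by the inverse Littlewood--Offord theory but adapted to permanent-valued functionals: either the coefficient vector at the current stage is sufficiently well-spread to extract a $(1-c_p)$ factor, or the already-revealed rows lie in a restricted combinatorial configuration that can be absorbed by a union bound (via an $\eps$-net or container-style argument over structured matrices). A natural vehicle for the non-structured case would be a \emph{swap/perturbation} lemma: if the coefficient vector were near-degenerate, modifying a small number of previously-revealed entries would force the corresponding sub-permanents to spread apart at only $O(1)$ probability cost. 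Combining such a dichotomy with the multi-step iteration is what I anticipate to be the core technical challenge of the proof.
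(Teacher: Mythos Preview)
Your high-level picture --- expose rows one at a time, view $\on{per}(A_n)=\sum_j a_{nj}M_j$ as a random sum, and try to accumulate $\Omega(n)$ multiplicative gains --- is the right skeleton, and it is indeed the shape of the paper's argument. But the mechanism you propose for extracting a constant-factor gain at each step is precisely where previous work got stuck, and your proposal does not contain the idea that gets around it.

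Concretely: a single application of Littlewood--Offord to $\sum_j a_{nj}M_j$ gives at best $Q\lesssim 1/\sqrt{\#\{j:M_j\ne 0\}}$, which is one factor of roughly $1/\sqrt n$, not $n$ factors of $(1-c_p)$. To iterate, you need to control the anticoncentration of the coefficient vector $(M_j)$ itself, and those $M_j$ are permanents of $(n-1)\times(n-1)$ submatrices --- the same object you are trying to understand. Your proposed cure is an inverse-Littlewood--Offord structural dichotomy (spread coefficients vs.\ structured configuration handled by an $\eps$-net/container union bound). This is exactly the Tao--Vu paradigm, and for the permanent there is no known way to carry out the ``structured'' branch: without linear algebra there is no analogue of ``rows lie in a low-dimensional subspace'', and the swap/perturbation lemma you sketch has no clear statement or proof. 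The paper explicitly notes that approaches built on the Erd\H os--Littlewood--Offord inequality top out at about $1/\sqrt n$.

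What the paper actually does is different in two essential ways. First, instead of inverse theory, it uses a \emph{relative} anticoncentration inequality (their Theorem~3.1, via Kesten): for a sum $X=\sum_i X_i$, one has $Q[X]\lesssim \sqrt{k/n}\cdot \Pr[X_K^*=0]$, where $X_K^*=\sum_{i\in K}(X_i-X_i')$ is a randomly $k$-thinned symmetrised version. Applied to the row expansion, the right-hand side becomes $\Pr[\on{per}(A^*)=0]$ for a matrix $A^*$ in which one row has been replaced by a sparse $\{\xi_i-\xi_i'\}$-row supported on a random $k$-set. Second --- and this is the step your proposal is missing entirely --- that sparse row is then moved into the \emph{fixed} part of the matrix: the paper works not with $n\times n$ random matrices but with an enlarged class of $(n+T)\times(n+T)$ matrices having $T$ deterministic rows (containing a $T\times T$ block with nonzero permanent) and $n$ random rows, and defines $f_p(n)$ as the worst-case $Q[\on{per}]$ over this class. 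After thinning, $A^*$ is again in this class with parameters $(n-s,T+1)$, so one obtains a genuine recursion $f_p(n)\le \text{(error)}+\text{(combination of }f_p(n-s)\text{)}$ that can be solved to give exponential decay. There is no $\eps$-net, no container argument, and no inverse theorem anywhere in the proof.

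So the gap is not in the outline but in the engine: the dichotomy you anticipate as ``the core technical challenge'' is not the tool that works here, and the actual tool --- relative anticoncentration plus the enlarged fixed/random matrix class that makes the recursion close --- does not appear in your plan.
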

Note that \cref{thm:baby-main} is an immediate corollary of \cref{thm:main} for $p=1/2$. We remark that while \cref{thm:baby-main} might be improvable to a super-exponential bound (as conjectured by Vu), in the more general setting of \cref{thm:main} an exponential bound is best-possible. Indeed, if the entries in the first row of $A_n$ are each equal to zero with probability $p$, then with probability $p^n$ the entire first row is zero, in which case $\on{per}(A_n)=0$.

\subsection{The range of the permanent}
For any set $S\subseteq \mb R$ and any $n\in\mb N$, we write $\Phi_S(n)$ for the set of all permanents of $n\times n$ matrices with entries in $S$.

For any discrete random variable $X$, it is easy to see that $X$ must be supported on at least $1/Q[X]$ different values. So, by applying \cref{thm:main} to random matrices whose entries are uniformly sampled from any pair $\{a,b\}\subseteq \mb R$, we obtain an exponential lower bound on $|\Phi_{\{a,b\}}(n)|$.
\begin{corollary}\label{cor:exponential-range}
For any set $S\subseteq \mb R$ of size at least 2, taking $c_{1/2}$ as in \cref{thm:main}, we have
\[|\Phi_S(n)|\ge \exp(c_{1/2}n),\]
\end{corollary}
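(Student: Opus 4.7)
The plan is to apply \cref{thm:main} in a minimal way to a random matrix supported on a two-element subset of $S$, as sketched in the paragraph preceding the corollary. First I would pick any two distinct elements $a,b\in S$ (which exist since $|S|\ge 2$) and let $A_n$ denote the random $n\times n$ matrix whose entries are independent and uniformly distributed on $\{a,b\}$. Each entry $\xi$ then satisfies $Q[\xi]=1/2$, so \cref{thm:main} with $p=1/2$ directly yields $Q[\on{per}(A_n)]\le\exp(-c_{1/2}n)$.

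Next I would translate this point-probability bound into a lower bound on the support size. The deterministic random variable $\on{per}(A_n)$ takes values in the finite set $\Phi_{\{a,b\}}(n)$, and for any discrete random variable $X$ with finite support one has
\[
1=\sum_{x\in\on{supp}(X)}\Pr[X=x]\le |\on{supp}(X)|\cdot Q[X],
\]
so $|\on{supp}(X)|\ge 1/Q[X]$. Applied here, this gives
\[
|\Phi_{\{a,b\}}(n)|\ge \frac{1}{Q[\on{per}(A_n)]}\ge \exp(c_{1/2}n).
\]
Finally, since $\{a,b\}\subseteq S$ implies $\Phi_{\{a,b\}}(n)\subseteq \Phi_S(n)$, the claimed bound $|\Phi_S(n)|\ge \exp(c_{1/2}n)$ follows immediately.

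Since the entire argument is a two-line reduction to \cref{thm:main}, there is no real obstacle beyond correctly identifying the two ingredients (the pigeonhole-type bound $|\on{supp}(X)|\ge 1/Q[X]$ and the monotonicity $\Phi_{\{a,b\}}(n)\subseteq \Phi_S(n)$). All of the substantive work has already been carried out in the proof of \cref{thm:main}.
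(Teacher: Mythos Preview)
Your proposal is correct and follows exactly the approach the paper sketches in the paragraph preceding the corollary: apply \cref{thm:main} with $p=1/2$ to a uniform $\{a,b\}$-valued random matrix, use the pigeonhole bound $|\on{supp}(X)|\ge 1/Q[X]$, and pass to $S$ via $\Phi_{\{a,b\}}(n)\subseteq \Phi_S(n)$. (One tiny slip: you wrote ``deterministic random variable'' where you meant ``discrete random variable''.)
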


This result is not new when $S=\{0,1\}$. Indeed, in this case a classical theorem of Brualdi and Newman~\cite{BN65} says that $\Phi_{\{0,1\}}(n)\supseteq\{0,\dots,2^{n-1}\}$ (see also the improvements in \cite{GT18}). However, \cref{cor:exponential-range} significantly improves on previous results when $S=\{-1,1\}$. Indeed, in 1993 Kr\"auter~\cite{Kra93} showed that $|\Phi_{\{-1,1\}}(n)|\ge n+1$. This has remained essentially the state of the art\footnote{One can slightly improve Kr\"auter's bound using Tao and Vu's results on permanents of random matrices, but it is unclear how to get more than $o(n)$ additional permanents this way.} until very recently, when Ingram and Razborov~\cite{IR} showed how to use ideas from additive combinatorics and Diophantine geometry to prove that $|\Phi_{\{-1,1\}}(n)|\ge\exp(c(\log n)^{2}/\log\log n)$ for some constant $c>0$. Actually, they explicitly raised the problem (see \cite[Problem 1]{IR}) of proving that $|\Phi_{\{-1,1\}}(n)|$ grows exponentially in $n$; this problem is solved by \cref{cor:exponential-range}.

\subsection{Proof ideas}
The full proof of \cref{thm:main} is not very long, but it nonetheless seems worthwhile to give a brief glimpse of the ideas. The starting point is a ``relative anticoncentration inequality'' (\cref{lem:relative-halasz}), proved using a theorem of Kesten~\cite{Kes69} (which appears as \cref{thm:kesten}, and is the only aspect of our proof which is not self-contained). Very roughly speaking, \cref{lem:relative-halasz} says that, under certain assumptions, sums of independent random variables are more anticoncentrated than ``thinned-out versions of themselves'' obtained by taking a randomly subsampled subset of the summands. This inequality is related to certain inequalities of Hal\'asz~\cite{Hal77} that played a crucial role in the work of Kahn, Koml\'os and Szemer\'edi~\cite{KKS95} on the singularity probability (mentioned earlier in this introduction). However, the way we use \cref{lem:relative-halasz} is rather different to all previous work in this area: we use it to prove a \emph{recursive} anticoncentration bound, taking advantage of the recursive properties of the formula in \cref{eq:det-per}.

In a bit more detail: in the setting of \cref{thm:main}, it is not hard to see that $\on{per}(A_n)$ can be interpreted as a weighted sum $a_{1}\xi_{1}+\dots+a_{n}\xi_{n}$,
where $\xi_{1},\dots,\xi_{n}$ are the entries of the last row, and
$a_{1},\dots,a_{n}$ are permanents of $(n-1)\times(n-1)$ submatrices of the first $n-1$ rows. If we condition on the first $n-1$ rows of $A_n$ (so the only remaining randomness is in the last row) then this is a sum of independent random variables, to which we can apply our relative anticoncentration theorem. It turns out that if we average the resulting bound over all possible outcomes of the first $n-1$ rows, we get a bound on the anticoncentration of $\on{per}(A_n)$, in terms of  the anticoncentration of the permanents of certain ``smaller'' random matrices.

To see why this is useful, suppose that we were able to prove a bound of the form $Q[\on{per}(A_n)]\le \alpha_pQ[\on{per}(A_{n-1})]$, for some $\alpha_p<1$. We would then be able to iterate this bound to prove that $Q[\on{per}(A_n)]\le \alpha_p^n$; i.e., $\on{per}(A_{n})$ is exponentially anticoncentrated, as desired. Unfortunately, the situation is not quite this simple: it turns out that for our smaller random matrices, 
the point probabilities of the entries are not bounded by $p$ anymore,
and also there are ``error terms'' corresponding to certain assumptions we need to make to effectively apply \cref{lem:relative-halasz}. Nonetheless, with various additional ideas, we can prove a sufficiently strong recursive anticoncentration bound (see \cref{thm:inductive})
for a certain more general class of random matrices (which are allowed to contain some fixed deterministic rows in addition to their random rows).

It is worth remarking that our proof is ``purely combinatorial''; in particular, it features no geometry, no linear algebra and no Fourier analysis. 

\subsection{Further directions}

The most obvious direction for further research is to improve the
bound in \cref{thm:baby-main}, towards the super-exponential
bound conjectured by Vu.

Also, it would be very interesting to generalise these ideas to \emph{symmetric} random matrices. For a random symmetric $n\times n$ matrix $S_n$ with $\pm 1$ entries, the second and third author~\cite{KS22} proved that $\Pr[\on{per}(S_n)=0]\le n^{-c}$ for some constant $c$, but this method cannot prove a bound stronger than $1/\sqrt n$. Even for the determinant, it is much more difficult to prove exponential bounds in the symmetric setting: this was accomplished only recently by Campos, Jenssen, Michelen and Sahasrabudhe~\cite{CJMS25}.

Another direction is to consider the ``small-ball'' setting: namely, instead of just studying the probability that $\on{per} (A_{n})$ is equal to a given point, we could study the probability that it lies in an interval of given length. Actually, Tao and Vu~\cite{TV08} already considered this small-ball setting: their proof incorporates a ``growth'' step, which allows them to prove that $\Pr[|\on{per} (A_{n})-x|\le n^{(1/2-\eps)n}]\le n^{-1/10}$ for any $x\in \mb R$ and any constant $\eps>0$ (assuming $n$ is sufficiently large). By incorporating this growth step into our own proof scheme, one can prove the same result with an exponential probability bound. (This introduces some additional technicalities, so in the interest of keeping the arguments simple, in this paper we decided to restrict our attention to the point-probability setting.)

In the latter direction it is worth mentioning the \emph{permanent anticoncentration conjecture} of Aaronson and Arkhipov~\cite{AA13}
(a key ingredient in the analysis
of the hardness of the \emph{BosonSampling} problem in quantum computing).
This is a precise conjecture about the typical size of the permanent of a Gaussian random matrix,
seemingly well outside the reach of current techniques. See
\cite{AA13} for some discussion of the relevant obstacles; some of
these obstacles can be overcome by ideas in this paper, but the critical
obstacle is that existing techniques are not sufficiently sensitive
to small differences in the permanent.

\subsection{Notation}
Let $\N=\{0,1,2,\dots\}$ be the set of non-negative integers. We use asymptotic notation throughout: for functions $f=f(n)$ and $g=g(n)$, we write $f=O(g)$ or $g=\Omega(f)$ or $f\lesssim g$ to mean that there is a constant $C$ such that $|f(n)|\le C|g(n)|$ for sufficiently large $n$, 
and we write $f=o(g)$ to mean that $f/g\to0$ as $n\to\infty$. Subscripts on asymptotic notation indicate quantities that should be treated as constants. Also, all random variables throughout this paper are assumed to be real-valued.

\section{A recursive bound}

We will actually prove a slight strengthening of \cref{thm:main}, where we consider matrices which have a random part and a fixed part. This is necessary for our recursive approach.

\begin{definition}\label{def:A}
For $0<p<1$ and $n\in \mb N$, let $f_p(n)$ be the supremum of $Q[\on{per}(A)]$, over
all random matrices $A$ of the following form.
    \begin{compactitem}
\item For some  $T\in \mb N$, let $A_{\mathrm{fix}}\in\mb R^{T\times(n+T)}$ be a fixed $T\times (n+T)$
matrix containing at least one $T\times T$ submatrix with nonzero
permanent.
\item Let $A_{\mathrm{rand}}\in\mb R^{n\times(n+T)}$ be a random matrix with independent entries, such that every entry $\xi$ of $A_{\mathrm{rand}}$ satisfies $Q[\xi]\le p$.
\item Let $A\in\mb R^{(n+T)\times (n+T)}$ be the $(n+T)\times (n+T)$ matrix obtained by
concatenating $A_{\mathrm{fix}}$ and $A_{\mathrm{rand}}$ (i.e.,
the first $T$ rows are the rows of $A_{\mathrm{fix}}$, and the last
$n$ rows are the rows of $A_{\mathrm{rand}}$).
\end{compactitem}
\end{definition}
In \cref{def:A}, we think of $n$ as being the ``size'' of $A$ (the random part $A_{\mr{rand}}$ has $n$ rows, and in the proof of \cref{thm:main}, we will only really use the randomness of $n$ of its columns). We will prove a recursive bound for $f_p(n)$, see \cref{thm:inductive} below. To state this bound, for any $0<p<1$, let us define 
\begin{equation}
\tau_p= 1-\prod_{s=1}^\infty (1-p^s),\label{eq:def:tau-p}
\end{equation}
and observe that $\tau_p<1$, no matter how close $p$ is to $1$. To see this, let $C_p$ be sufficiently large in terms of $p$ such that $\exp(-C_p p)\le 1-p$, so for every $0\le x\le p$ we have $1-x\ge \exp(-C_px)$, and hence $1-\tau_p=\prod_{s=1}^\infty (1-p^s)\ge \prod_{s=1}^\infty \exp(-C_p p^s)=\exp\big(\!-C_p\sum_{s=1}^\infty p^s\big)=\exp(-C_pp/(1-p))>0$.

\begin{theorem}
\label{thm:inductive}For any $0<p<1$ and $t\in \mb N$ and $\eps>0$, there is $\delta>0$
and $N\in \mb N$ such that for all $n\ge N$ we have
\[
f_p(n)\le\exp(-\delta n)\;+\;\eps \sum_{s=1}^{t}p^sf_p(n-s)\;+\;(1+\eps)\tau_p \cdot f_p(n-t)\;+\;(1+\eps)\sum_{s=t+1}^{n}p^{s}f_p(n-s).
\]
\end{theorem}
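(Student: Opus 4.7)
The plan is to expand $\on{per}(A)$ along the last (random) row and reduce to a sum of independent random variables, to which \cref{lem:relative-halasz} can be applied. Writing the last row of $A$ as $\xi_{1},\dots,\xi_{n+T}$ and letting $a_{j}:=\on{per}(A^{(j)})$ denote the permanent of the $(n+T-1)\times(n+T-1)$ minor obtained by deleting the last row and column $j$, we have $\on{per}(A)=\sum_{j=1}^{n+T}\xi_{j}a_{j}$. Each $A^{(j)}$ consists of $T$ fixed rows (those of $A_{\mr{fix}}$ with column $j$ deleted) and $n-1$ random rows, and for every $j$ lying outside the column-set $J$ of some fixed witnessing $T\times T$ nonzero-permanent submatrix of $A_{\mr{fix}}$, the matrix $A^{(j)}$ is admissible in the sense of \cref{def:A} with parameter $n-1$, so $Q[\on{per}(A^{(j)})]\le f_{p}(n-1)$. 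The at most $T$ ``bad'' columns $j\in J$ are to be absorbed into the $\exp(-\delta n)$ error term by a union-bound argument.

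After conditioning on the first $n+T-1$ rows of $A$, the $a_{j}$'s become deterministic and $\on{per}(A)=\sum_{j}\xi_{j}a_{j}$ is a linear combination of independent random variables with $Q[\xi_{j}]\le p$. I would then apply \cref{lem:relative-halasz} to bound the conditional point probability of this sum in terms of an average, over a random thinned subset $S\subseteq[n+T]$, of point probabilities of the sub-sums $\sum_{j\in S}\xi_{j}a_{j}$. Each column is independently retained with a weighting tied to $p$, and the contribution from subsets of size $s$ ought to be weighted by roughly $p^{s}$, reflecting the fact that $s$ independent entries with $Q[\xi_{j}]\le p$ jointly contribute a factor on the order of $p^{s}$ from anticoncentration.

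Taking the outer expectation over the first $n+T-1$ rows, each sub-sum $\sum_{j\in S}\xi_{j}a_{j}$ can, via iterated minor expansion, be interpreted as controlling the permanent of an admissible matrix with $n-|S|$ random rows, so its average point probability is at most $f_{p}(n-|S|)$. This is the source of the $f_{p}(n-s)$ terms on the right-hand side of \cref{thm:inductive}. The $\tau_{p}$ coefficient emerges because splicing the random-subset size distribution at the threshold $t$ concentrates tail mass $\{|S|\ge t\}$ on exactly $1-\prod_{s\ge 1}(1-p^{s})=\tau_{p}$; the corresponding $f_{p}(n-|S|)$'s are bounded by $f_{p}(n-t)$ using the trivial monotonicity of $f_{p}$. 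The $(1+\eps)$ slack arises from the asymptotic nature of Kesten's theorem (\cref{thm:kesten}) underlying the lemma, and the $\eps\sum_{s\le t}p^{s}f_{p}(n-s)$ term absorbs a mild secondary contribution from small subsets.

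The hard part, I expect, will be arranging the recursion so that the coefficients match exactly the form stated in \cref{thm:inductive}. Since \cref{lem:relative-halasz} is an asymptotic statement, routing the $(1+\eps)$ factors requires careful choice of the thinning distribution and of the splicing threshold. Handling the $T$ bad columns in $J$ and any degenerate realisations of the first $n+T-1$ rows --- neither of which can be directly re-interpreted via the minor expansion into an admissible smaller matrix --- contributes the additive $\exp(-\delta n)$ error rather than fitting into the multiplicative recursion, and necessitates a preliminary union-bound step together with the elementary worst-case estimate $f_p(m)\le 1$.
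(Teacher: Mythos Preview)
Your proposal has a fundamental structural gap in how the terms $f_p(n-s)$ for varying $s$ arise. A single application of \cref{lem:relative-halasz} to the last-row expansion $\on{per}(A)=\sum_j \xi_j a_j$ produces the quantity $\Pr[\sum_{j\in K}(\xi_j-\xi_j')a_j=0]$ for a random set $K$ of some \emph{fixed} size $k$ (the lemma does not involve independent retention with varying subset size). This quantity is the permanent of the matrix $A$ with its last row replaced by the vector supported on $K$; after conditioning on $K$ and on the values $\xi_j-\xi_j'$, that row becomes deterministic, so the resulting matrix is admissible with $n-1$ random rows, not $n-|K|$. Thus one step of your scheme yields $f_p(n-1)$, and there is no ``iterated minor expansion'' that turns the single thinned sum $\sum_{j\in S}\xi_j a_j$ into a permanent of a matrix with $n-|S|$ random rows. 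Your proposed origin of the factor $p^s$ (from $s$ independent entries in the thinned sum) and of $\tau_p$ (as tail mass of $|S|$) are accordingly off the mark, and the ``trivial monotonicity of $f_p$'' you invoke is neither proved nor used in the paper.

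The paper's route is quite different. It introduces, for each level $s=0,1,\dots,n$, events $\mathcal E(s,\alpha)$ asserting that a positive proportion of size-$s$ column subsets $I\subseteq\{1,\dots,n\}$ give $\on{per}(A[-I])\ne 0$, and telescopes $\Pr[\on{per}(A)=z]$ along the chain $\mathcal E(n)\supseteq\cdots\supseteq\mathcal E_z(0)$. The term $f_p(n-s)$ enters via Markov's inequality (\cref{lem:markov}): $\Pr[\overline{\mathcal E(s,\alpha)}]\le f_p(n-s)/(1-\alpha)$. The factor $p^s$ comes not from the thinning but from the elementary \cref{lem:easy-bound}: given one nonzero minor at level $s$, the probability that \emph{all} minors at level $s-1$ equal $z$ is at most $p^s$. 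The coefficient $\tau_p$ is the probability that this cascade of elementary bounds fails at some level $s\le t$, namely $1-\prod_{s\ge 1}(1-p^s)$. The relative anticoncentration lemma is used only inside \cref{lem:hard-bound}, at a \emph{single} level $s$, to win an extra small factor $\alpha$ on top of $p^s f_p(n-s)$; this is precisely what produces the $\eps$-small first sum and tightens the main term to $(1+\eps)\tau_p\cdot f_p(n-t)$. You should rebuild the argument around this multi-level event decomposition rather than trying to extract the full recursion from one row.
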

In the course of proving \cref{thm:inductive}, we will also prove the following easier bound (which shall serve as a base case when proving \cref{thm:main} by induction).
\begin{proposition}\label{prop:easy}
    For any $p<1$ and $n\in \mb N$, we have
    \[f_p(n)\le 1-\prod_{s=1}^n(1-p^s)\le \tau_p<1.
    \]
\end{proposition}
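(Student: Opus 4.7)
The second inequality $1-\prod_{s=1}^n(1-p^s)\le\tau_p$ is immediate: each factor $(1-p^s)\le 1$, so $\prod_{s=1}^n(1-p^s)\ge\prod_{s=1}^\infty(1-p^s)=1-\tau_p$. For the first inequality, the plan is to combine a Laplace expansion along the last random row with an iterative row-by-row reveal.

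Fix any $A$ satisfying the setup of \cref{def:A}, let $\xi_j$ denote the $j$-th entry of its last (random) row, and let $b_j$ denote the permanent of the $(n+T-1)\times(n+T-1)$ submatrix obtained by deleting row $n+T$ and column $j$. Expanding $\on{per}(A)=\sum_j \xi_j b_j$ along the last row and conditioning on the first $n+T-1$ rows: if some $b_{j_0}\ne 0$, then $\on{per}(A)$ is an injective affine function of $\xi_{j_0}$, so the conditional point probability is at most $Q[\xi_{j_0}]\le p$; if instead all $b_j=0$, then $\on{per}(A)=0$ deterministically. Splitting $\Pr[\on{per}(A)=z]$ according to these two cases gives
\[
Q[\on{per}(A)]\le p+(1-p)\,\Pr[E],
\]
where $E$ is the event that every $(n+T-1)\times(n+T-1)$ submatrix of $A$ with its last row removed has permanent zero.

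The heart of the proof is to bound $\Pr[E]$ by revealing the $n$ random rows one at a time. For $0\le k\le n$ let $M_k$ be the submatrix of $A$ consisting of the first $T+k$ rows, and let $\Phi_k$ be the event that some column subset $S$ of size $T+k$ makes $\on{per}(M_k[S])\ne 0$. Then $\Phi_0$ holds by the assumption on $A_{\mathrm{fix}}$, and $E=\neg\Phi_{n-1}$. The core claim is that for each $1\le k\le n$, and any realization of the first $T+k-1$ rows on which $\Phi_{k-1}$ holds,
\[
\Pr\bigl[\Phi_k\mid\text{first } T+k-1 \text{ rows}\bigr]\ge 1-p^{n-k+1}.
\]
To prove this, fix some $S_0$ witnessing $\Phi_{k-1}$, set $P:=\on{per}(M_{k-1}[S_0])\ne 0$, and consider the $n-k+1$ extensions $S_0\cup\{j\}$ for $j\notin S_0$. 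Expanding $\on{per}(M_k[S_0\cup\{j\}])$ along the newly revealed row writes it as $P\,\xi_{T+k,j}+C_j$, where $C_j$ is determined by the first $T+k-1$ rows together with $(\xi_{T+k,i})_{i\in S_0}$. Conditional on these, the event $\on{per}(M_k[S_0\cup\{j\}])=0$ forces $\xi_{T+k,j}$ to one specific value (probability $\le p$), and since the entries of a single row are mutually independent, these events are independent across the $n-k+1$ choices of $j$, giving probability $\le p^{n-k+1}$ that every extension fails.

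Averaging the core claim yields $\Pr[\Phi_k\mid\Phi_{k-1}]\ge 1-p^{n-k+1}$, hence $\Pr[\Phi_k]\ge(1-p^{n-k+1})\Pr[\Phi_{k-1}]$; iterating down to $\Pr[\Phi_0]=1$ then gives $\Pr[\Phi_{n-1}]\ge\prod_{s=2}^n(1-p^s)$, so $\Pr[E]\le 1-\prod_{s=2}^n(1-p^s)$. Plugging back into the Laplace bound,
\[
Q[\on{per}(A)]\le p+(1-p)\!\left(1-\prod_{s=2}^n(1-p^s)\right)=1-(1-p)\prod_{s=2}^n(1-p^s)=1-\prod_{s=1}^n(1-p^s),
\]
as required. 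The main subtlety is the independence used in the core claim: it relies on fixing a single witness $S_0$ and exploiting that, conditional on $(\xi_{T+k,i})_{i\in S_0}$, the remaining entries $(\xi_{T+k,j})_{j\notin S_0}$ of the newly revealed row are still mutually independent.
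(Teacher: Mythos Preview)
Your proof is correct and follows essentially the same approach as the paper. The paper packages the row-by-row reveal into \cref{lem:easy-bound} (showing $\Pr[\overline{\mathcal{E}_z(s-1)}\mid\mathcal{E}(s)]\le p^s$) and then chains these conditional bounds, while you separate out the last row as a Laplace step and iterate via your events $\Phi_k$; but the core mechanism---fixing a witnessing column set, revealing the new row's entries outside that set, and using the nonzero minor as the leading coefficient to get $s$ (or $n-k+1$) independent point-probability bounds---is identical.
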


\begin{proof}[Proof of \cref{thm:main} given \cref{thm:inductive} and \cref{prop:easy}]
As in the theorem statement, fix $0<p<1$. First, note that it suffices to prove that $f_p(n)\le \exp(-c_p n)$ for some $c_p>0$. Indeed, any random matrix $A_n$ as in \cref{thm:main} trivially satisfies the conditions in \cref{def:A} for $T=0$, so $Q[\on{per}(A_n)]\le f_p(n)$.

Recalling that $\tau_p<1$, choose $\mu>0$ such that $\tau_p<1-5\mu$. Then,  choose $\eps>0$ such that $\eps<(1-\sqrt{p})\mu$ and $(1+\eps)\tau_p<(1+\eps)(1-5\mu)<1-4\mu$. Also, choose $t\in \mb N$ such that $p^{t/2}<(1-\sqrt{p})\mu$. Now, let $\delta>0$ and $N\in \mb N$ be as in \cref{thm:inductive}. By making $N$ larger if needed, we may assume that $\exp(-\delta N/2)\le \mu$.

By \cref{prop:easy} we have $f_p(n)<1$ for all $n\in \mb N$. So, we can choose some $c_p>0$ such that $f_p(n)<\exp(-c_p n)$ for $n=0,1,\dots,N$, and also $c_p<\delta/2$ and $\exp(c_p)<p^{-1/2}$ and $(1-4\mu)\exp(c_p t)<1-3\mu$.

Let us now prove by induction that $f_p(n)\le \exp(-c_p n)$ for all $n\in \mb N$. For $n\le N$ this is automatically guaranteed by the choice of $c_p$. So let us now consider $n>N$, and assume that we have already shown $f_p(n')\le \exp(-c_p n')$ for all $n'<n$. Then, using \cref{thm:inductive}, we have
\begin{align*}
f_p(n)&\le\exp(-\delta n)+\eps \sum_{s=1}^{t}p^sf_p(n-s)+(1+\eps)\tau_p \cdot f_p(n-t)+(1+\eps)\sum_{s=t+1}^{n}p^{s}f_p(n-s)\\
&\le\exp(-\delta n)+\eps \sum_{s=1}^{t}p^s\exp(-c_p(n\!-\!s))+(1-4\mu)\exp(-c_p(n\!-\!t))+(1+\eps)\!\!\sum_{s=t+1}^{n}p^{s}\exp(-c_p(n\!-\!s)).
\end{align*}
This yields
\begin{align*}
f_p(n)\cdot \exp(c_pn)&\le \exp(-(\delta-c_p) n)+\eps \sum_{s=1}^{t}p^s\exp(c_p s)+(1-4\mu)\exp(c_pt)+(1+\eps)\sum_{s=t+1}^{n}p^{s}\exp(c_ps)\\
&\le \exp(-\delta n/2)+\eps \sum_{s=1}^{t}p^sp^{-s/2}+(1-3\mu)+(1+\eps)\sum_{s=t+1}^{n}p^{s}p^{-s/2}\\
&\le (1-3\mu)+\exp(-\delta N/2)+\eps \sum_{s=1}^{n}p^{s/2}+\sum_{s=t+1}^{n}p^{s/2}\\
&\le (1-3\mu)+\exp(-\delta N/2)+\frac{\eps}{1-\sqrt{p}}+\frac{p^{t/2}}{1-\sqrt{p}}\le (1-3\mu)+\mu+\mu+\mu=1,
\end{align*}
as desired.
\end{proof}

\section{A relative anticoncentration inequality}

A key ingredient in the proof of \cref{thm:inductive} is the following ``relative anticoncentration inequality'', stating that under certain assumptions, a sum of independent random variables is more anticoncentrated than a ``randomly thinned-out version of itself''. This is similar to \cite[Lemma~7.14]{TV06} and related inequalities of Hal\'asz~\cite{Hal77}, used in previous work on the singularity probability of random matrices.

\begin{theorem}\label{lem:relative-halasz}
Consider independent random variables $X_{1},\dots,X_{n}$, and for each $i=1,\dots,n$, let $X_{i}'$ be an independent copy of $X_{i}$. For some integer $1\le k<n/4$, let $K\subseteq\{1,\dots,n\}$ be a uniformly random subset of exactly $k$ indices (independent from the variables $X_1,\dots,X_n,X_1',\dots, X_n'$). Now, define $X=X_{1}+\dots+X_{n}$ and $X_{K}^{*}=\sum_{i\in K}(X_{i}-X_{i}')$, and assume that $\Pr[X_K^*=0]<1-4k/n$.
Then we have
\[
Q[X]\lesssim\sqrt{\frac{k}{n}}\cdot \frac{\Pr[X_{K}^{*}=0]}{1-\Pr[X_{K}^{*}=0]}.
\]
\end{theorem}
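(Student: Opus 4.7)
The plan is to apply Kesten's theorem (\cref{thm:kesten}) to the sum $X = X_1 + \cdots + X_n$, which yields $Q[X] \lesssim 1/\sqrt{B}$ where $B = \sum_i(1 - \alpha_i)$ with $\alpha_i = \Pr[X_i = X_i']$. The entire problem thus reduces to proving a lower bound on $B$ of the form $B \gtrsim (n/k)\cdot(1-p)^2/p^2$ in terms of $p$, $k$, $n$.

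An elementary first observation, via the Weierstrass product inequality, is that
\[
\Pr[X_K^* = 0 \mid K] \;\geq\; \prod_{i\in K}\alpha_i \;\geq\; 1 - B_K, \qquad \text{where } B_K = \sum_{i\in K}(1-\alpha_i).
\]
Averaging over $K$ gives $1-p \leq \mathbb{E}_K B_K = (k/n)B$, i.e.\ $B \geq n(1-p)/k$. Combined with Kesten on $X$, this already yields $Q[X] \lesssim \sqrt{k/n}/\sqrt{1-p}$, which suffices to conclude whenever $p \geq 1/2$, since in that range $1/\sqrt{1-p} \leq \sqrt{2}\,p/(1-p)$.

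For small $p$ (say $p \leq 1/2$) a finer estimate is needed. I would apply Kesten's theorem a second time, now to $X_K^*$ \emph{conditional on} $K$, yielding $\Pr[X_K^* = 0 \mid K] \lesssim 1/\sqrt{B_K}$ whenever $B_K \geq 1$. Averaging this over $K$ and using concentration of $B_K$ about its mean $\mathbb{E}_K B_K = kB/n \geq 1-p \geq 1/2$ (which follows from the previous paragraph), one expects $p \lesssim 1/\sqrt{kB/n}$, equivalently $B \gtrsim n/(kp^2)$. Plugging this back into Kesten on $X$ gives $Q[X] \lesssim p\sqrt{k/n}$, which matches the target in this range.

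I expect the main obstacle to be the concentration argument for $B_K$. When the $1-\alpha_i$ are very heterogeneous --- for example when a few indices carry most of $B$ --- the variable $B_K$ has large variance relative to its mean, and naive Jensen averaging of $1/\sqrt{B_K}$ is too lossy. The hypothesis $p < 1 - 4k/n$ is exactly what rules out this degenerate situation: combined with $B \geq n(1-p)/k$, it forces $B$ and $\mathbb{E}_K B_K$ to be of the right order and controls how often $B_K$ can be atypically small. The edge case $B_K = 0$ (when every $X_i$ with $i\in K$ is deterministic) is handled by the trivial bound $\Pr[X_K^* = 0 \mid K]\le 1$, contributing at most $O(k/n)$ to $p$.
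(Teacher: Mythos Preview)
Your approach differs substantially from the paper's and has two gaps. First, your opening step claims that \cref{thm:kesten} yields $Q[X]\lesssim 1/\sqrt{B}$ with $B=\sum_i(1-\alpha_i)$, but the paper's Kesten theorem requires a \emph{uniform} bound $Q[X_i]\le p$ and gives $p/\sqrt{(1-p)m}$; it does not handle heterogeneous $\alpha_i$. What you need is the Rogozin-type inequality $Q[X]\lesssim 1/\sqrt{\sum_i(1-Q[X_i])}$ (together with $1-\alpha_i\le 2(1-Q[X_i])$), which is true but strictly stronger than anything stated in the paper. Second, the concentration of $B_K$ about $\mu=kB/n$ is the crux and you have not established it; the hypothesis $p<1-4k/n$ does not by itself tame heterogeneity in the $1-\alpha_i$. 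What would actually work is the variance bound $\operatorname{Var}[B_K]\le\mu$ (each summand lies in $[0,1]$, and sampling without replacement only helps), so Chebyshev gives $\Pr[B_K<\mu/2]\le 4/\mu$ and hence $\mathbb{E}[\min(1,C/\sqrt{B_K})]\lesssim 1/\sqrt{\mu}$. Until this is written out, your small-$p$ case is incomplete.

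The paper's proof sidesteps both issues by a block decomposition rather than working with $B$ directly. It partitions $\{1,\dots,n\}$ into $h=\lfloor n/k\rfloor$ random disjoint size-$k$ sets $I_1,\dots,I_h$, notes that the quantities $\rho_j=\Pr[\sum_{i\in I_j}(X_i-X_i')=0\mid I_j]$ satisfy $\mathbb{E}[\sum_j\rho_j]=(1-\lambda)h$ with $\lambda=1-\Pr[X_K^*=0]$, deterministically fixes a partition achieving this average, and then uses a Markov-type count to find at least $(\lambda/2)h$ blocks with $\rho_j<(1-\lambda)/(1-\lambda/2)$. Finally it applies \cref{cor:relative-halasz-0} (the paper's heterogeneous Kesten variant, which only needs $\Pr[Y_j=Y_j']\le p$ for $m$ of the summands) to the block sums $Y_j=\sum_{i\in I_j}X_i$. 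This uses only the paper's stated tools and requires no concentration argument for $B_K$ at all.
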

The main ingredient to prove \cref{lem:relative-halasz} is the following (special case of a) theorem of Kesten~\cite{Kes69}.
\begin{theorem}[{\cite[Theorem~2]{Kes69}}]
\label{thm:kesten}Consider independent random variables $X_{1},\dots,X_{m}$ and $0<p<1$, such that $Q[X_i]\le p$ holds for each $i=1,\dots,m$. Then
\[
Q[X_1+\dots+X_m]\lesssim\frac{p}{\sqrt{(1-p)m}}.
\]
\end{theorem}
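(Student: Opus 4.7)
I would prove Kesten's inequality by the Fourier-analytic route based on Esseen's concentration inequality. The first step is to apply Esseen's bound, which says that for any real random variable $X$ and any $T > 0$,
\[ Q[X] \lesssim \frac{1}{T} \int_{-T}^{T} |\phi_X(t)| \, dt, \]
where $\phi_X$ denotes the characteristic function of $X$. The appropriate choice of $T$ depends on the scale of the distribution (for discrete $X$ on a lattice of spacing $h$, one can take $T = \pi/h$; otherwise a small smoothing correction is needed). Since $X = X_1 + \cdots + X_m$ is a sum of independent variables, the characteristic function factors as $\phi_X(t) = \prod_{i=1}^m \phi_{X_i}(t)$, so bounding $Q[X]$ reduces to controlling each factor $|\phi_{X_i}(t)|$.

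Next, I would exploit the hypothesis $Q[X_i] \le p$ to bound each $|\phi_{X_i}(t)|$ from above. Introducing an independent copy $X_i'$ of $X_i$ and using the symmetrization identity $|\phi_{X_i}(t)|^2 = \mathbb{E}[\cos(t(X_i - X_i'))]$, one obtains
\[ 1 - |\phi_{X_i}(t)|^2 = 2\, \mathbb{E}[\sin^2(t(X_i - X_i')/2)] \ge 2(1-p) \, \mathbb{E}[\sin^2(t(X_i - X_i')/2) \mid X_i \ne X_i'], \]
because $\Pr[X_i = X_i'] = \sum_z \Pr[X_i = z]^2 \le Q[X_i] \le p$. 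Using the inequality $|\phi_X(t)|^2 \le \exp(-\sum_i (1 - |\phi_{X_i}(t)|^2))$ then produces a Gaussian-type decay of the form $|\phi_X(t)|^2 \lesssim \exp(-c(1-p) m \sigma^2 t^2)$ near the origin, where $\sigma^2$ is an effective scale coming from the conditional second moments $\mathbb{E}[(X_i - X_i')^2 \mid X_i \ne X_i']$. Integrating this Gaussian estimate in $t$ and combining with Esseen's bound yields a preliminary estimate of the form $Q[X] \lesssim 1/(T \sigma \sqrt{(1-p) m})$, which by itself already recovers the classical Kolmogorov--Rogozin bound $1/\sqrt{(1-p) m}$.

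The main obstacle is to extract the sharp factor of $p$ in the numerator, which is the improvement of Kesten's theorem over Kolmogorov--Rogozin. Intuitively, since no atom of $\mu_i$ has mass larger than $p$, each $\mu_i$ must spread over at least about $1/p$ lattice points, so the intrinsic scale of the distribution (and hence of $X_i - X_i'$ conditioned on $X_i \ne X_i'$) is of order $1/p$ in lattice units, forcing the ratio $T\sigma$ to be of order $1/p$ under the right calibration. Making this rigorous, especially when the $\mu_i$ are heterogeneous, typically involves decomposing each distribution as $\mu_i = p_i \delta_{z_i^*} + (1 - p_i) \nu_i$ with $p_i \le p$ and running the Fourier argument separately on the bulk part $\nu_i$ and the modal atom, while tracking the scale of $\nu_i$ against the Esseen parameter $T$. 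This two-scale calibration is the technical heart of Kesten's original argument in \cite{Kes69}, and I would follow that route.
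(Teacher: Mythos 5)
Two things. First, for context: the paper does not prove this statement at all --- it is quoted verbatim from Kesten \cite{Kes69} and explicitly flagged as the one non-self-contained ingredient, with the remark that Kesten's own proof is a combinatorial argument built on Rogozin's inequality \cite{Rog61}, while a Fourier-analytic proof is due to \cite{PY77}. So the route you sketch is essentially the \cite{PY77} alternative, and your attribution of the ``two-scale calibration'' to Kesten's original argument is off --- that is not how \cite{Kes69} proceeds.

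Second, and more importantly, as a proof attempt your proposal has a genuine gap at exactly the decisive step. Everything up to the Kolmogorov--Rogozin-type bound $Q[X]\lesssim 1/\sqrt{(1-p)m}$ is standard (and even there the sketch is too optimistic: the $X_i$ are arbitrary real random variables, so the ``conditional second moments'' $\E[(X_i-X_i')^2\mid X_i\ne X_i']$ may be infinite or dominated by outliers, and $\sin^2\big(t(X_i-X_i')/2\big)$ admits no uniform lower bound of the form $c\,t^2(X_i-X_i')^2$; one must truncate at scale $1/|t|$, after which the claimed clean Gaussian decay $\exp(-c(1-p)m\sigma^2t^2)$ with a fixed $\sigma$ does not follow, and a Hal\'asz-type or $\E\min(1,t^2(X_i-X_i')^2)$ argument is needed). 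But the entire content of Kesten's theorem beyond Kolmogorov--Rogozin is the factor $p$ in the numerator, and your text acknowledges this is ``the main obstacle'' and then defers it: the decomposition $\mu_i=p_i\delta_{z_i^*}+(1-p_i)\nu_i$ and the calibration of the Esseen parameter $T$ against the scale of $\nu_i$ are described only ``intuitively,'' in vague lattice language that does not apply to general (non-lattice, heterogeneous) distributions, and you conclude by saying you would follow the original route. Nothing in the argument as written produces the factor $p$, so the proposal does not constitute a proof of the stated inequality; it is a plan whose hard step is exactly the theorem's improvement over the classical bound.
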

We note that \cref{thm:kesten} is the only part of this paper that is not self-contained. It was proved with a simple (but quite ingenious!) combinatorial argument, via an anticoncentration inequality of Rogozin~\cite{Rog61} (which, in this context, is essentially equivalent to the so-called \emph{Erd\H os--Littlewood--Offord theorem}~\cite{Erd45}). See also \cite{PY77} for an alternative Fourier-analytic proof.

In our proof of \cref{lem:relative-halasz}, we will actually need the following slight generalisation of \cref{thm:kesten}, which compares $Q[X_1+\dots+X_m]$ to probabilities that the random variables $X_i$ are equal to independent copies of themselves.

\begin{corollary}
\label{cor:relative-halasz-0}Consider independent random variables
$X_{1},\dots,X_{n}$ and $0<p<1$. For each $i=1,\dots,n$, let $X_{i}'$ be an independent
copy of $X_{i}$. Suppose that we have $\Pr[X_{i}=X_{i}']\le p$ for
at least $m\ge 2$ different indices $i\in \{1,\dots,n\}$. Then
\[
Q[X_1+\dots+X_n]\lesssim\frac{p}{\sqrt{(1-p)m}}.
\]
\end{corollary}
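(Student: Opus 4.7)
The plan is to combine \cref{thm:kesten} with a symmetrization step and a convolution-splitting trick. The starting observation is that $Y_i := X_i - X_i'$ is symmetric about $0$, and Cauchy--Schwarz gives $\Pr[Y_i = z] \le \Pr[Y_i = 0]$ for every $z$, so $Q[Y_i] = \Pr[Y_i = 0] = \Pr[X_i = X_i']$. The hypothesis therefore provides a set $I \subseteq \{1,\dots,n\}$ of size $m$ on which the \emph{symmetrized} variables satisfy $Q[Y_i] \le p$, which is exactly the setup of \cref{thm:kesten}.

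The main obstacle is that both naive ways of exploiting this lose factors. The most direct attempt --- applying \cref{thm:kesten} to $\{Y_i\}_{i \in I}$ to bound $Q[S - S']$ (where $S := X_1 + \dots + X_n$ and $S' := X_1' + \dots + X_n'$), and then using $Q[S]^2 \le \Pr[S = S'] \le Q[S - S']$ --- only yields $Q[S] \lesssim \sqrt{p/\sqrt{(1-p)m}}$, losing a square root. A second attempt, noting that $\Pr[X_i = X_i'] \le p$ implies $Q[X_i] \le \sqrt{p}$ and applying \cref{thm:kesten} directly to the $X_i$'s, yields only the weaker $\sqrt{p}/\sqrt{(1-\sqrt{p})m}$. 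So we need a genuinely new idea to preserve the correct power of $p$.

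The fix is to partition $I = A \sqcup B$ with $|A|, |B| \asymp m$, and write $X_1 + \dots + X_n = S_A + S_B + R$, where $S_A := \sum_{i \in A} X_i$, $S_B := \sum_{i \in B} X_i$, and $R$ collects the indices outside $I$. Since $R$ is independent of $S_A + S_B$, we have $Q[X_1 + \dots + X_n] \le Q[S_A + S_B]$. Writing $\mu_Z$ for the pmf of $Z$, Cauchy--Schwarz applied to the convolution identity $\mu_{S_A + S_B} = \mu_{S_A} * \mu_{S_B}$ gives
\[
Q[S_A + S_B] = \|\mu_{S_A} * \mu_{S_B}\|_\infty \le \|\mu_{S_A}\|_2 \, \|\mu_{S_B}\|_2.
\]
Finally, $\|\mu_{S_A}\|_2^2 = \Pr[S_A = S_A'] = \Pr[\sum_{i \in A} Y_i = 0] \le Q[\sum_{i \in A} Y_i] \lesssim p/\sqrt{(1-p)|A|}$ by \cref{thm:kesten} applied to $\{Y_i\}_{i \in A}$, and symmetrically for $B$. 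Multiplying (and using $|A|, |B| \asymp m$) yields $\|\mu_{S_A}\|_2 \|\mu_{S_B}\|_2 \lesssim p/\sqrt{(1-p)m}$, as required. The point of the split is that it replaces the lossy bound $\|\mu\|_\infty \le \|\mu\|_2$ by the sharp bound $\|\mu * \nu\|_\infty \le \|\mu\|_2 \|\nu\|_2$, converting $\ell^2$-control into $\ell^\infty$-control without losing a square root.
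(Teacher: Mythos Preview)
Your proof is correct and essentially identical to the paper's: both split the index set into two halves $A,B$, use the Cauchy--Schwarz convolution bound $Q[S_A+S_B]\le \|\mu_{S_A}\|_2\|\mu_{S_B}\|_2$ (packaged in the paper as \cref{lem:duplication-0}), observe that $Q[X_i-X_i']\le \Pr[X_i=X_i']\le p$, and apply \cref{thm:kesten} to the symmetrized variables on each half. The only cosmetic difference is that the paper explicitly reduces to the discrete case before invoking the $\ell^2$ identity $\|\mu_{S_A}\|_2^2=\Pr[S_A=S_A']$, which you should also note for full rigor.
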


To deduce \cref{cor:relative-halasz-0} from \cref{thm:kesten} we will need a few more general inequalities. First, we record the (near-trivial) fact that a sum of independent random variables is at least as anticoncentrated as its summands.

\begin{fact}\label{fact:anticoncentration-monotonicity}
    For independent random variables $X,Y$ we have $Q[X+Y]\le Q[X]$.
\end{fact}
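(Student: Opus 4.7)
The plan is to condition on $Y$ and apply the definition of $Q[X]$ pointwise. Fix an arbitrary $z \in \mb R$. By independence of $X$ and $Y$, conditioning on $Y$ gives
\[
\Pr[X+Y = z] \;=\; \E\big[\Pr[X = z - Y \mid Y]\big].
\]
For every fixed value $y$ in the support of $Y$, the conditional probability $\Pr[X = z - y \mid Y = y]$ equals $\Pr[X = z - y]$ by independence, which is at most $Q[X] = \sup_x \Pr[X=x]$ by definition. Taking expectation, the right-hand side is bounded by $Q[X]$, so $\Pr[X+Y=z] \le Q[X]$. Since this holds for every $z$, taking the supremum over $z$ yields $Q[X+Y] \le Q[X]$.

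There is no real obstacle here; the only (very mild) care needed is to treat the general case where $Y$ need not be discrete, which the conditional-expectation formulation handles uniformly. Equivalently, one may write $\Pr[X+Y=z] = \int \Pr[X=z-y]\,dF_Y(y) \le Q[X]\cdot \int dF_Y(y) = Q[X]$, where $F_Y$ denotes the distribution function of $Y$.
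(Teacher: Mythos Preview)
Your proof is correct and is essentially identical to the paper's own argument: both condition on $Y$, write $\Pr[X+Y=z]=\E\big[\Pr[X=z-Y\mid Y]\big]$, and bound the inner probability by $Q[X]$. The only difference is cosmetic---you add a remark about the non-discrete case, which is harmless.
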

\begin{proof}
    For any $z\in \mb R$ we have
    \[\Pr[X+Y=z]=\Pr[X=z-Y]=\mb E\big[\Pr[X=z-Y\,|\,Y]\big]\le \mb E\big[Q[X]\big]=Q[X].\qedhere\]
\end{proof}

Next, we record a simple ``duplication'' inequality for a sum of independent random variables.

\begin{lemma}
\label{lem:duplication-0}Consider independent discrete
random variables $X_1$ and $X_2$. Letting $X_1'$ and $X_2'$ be independent copies of $X_1$ and $X_2$, we have
\[
Q[X_1+X_2]\le \Pr[X_1=X_1']^{1/2}\cdot \Pr[X_2=X_2']^{1/2}.
\]
In particular (applying the above inequality to the case where $X_2$ has the distribution of $-X_1$), we have
\[
Q[X_1-X_1']\le \Pr[X_1=X_1'].
\]
\end{lemma}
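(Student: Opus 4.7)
The plan is to prove the first inequality directly by applying Cauchy--Schwarz to the convolution formula for point probabilities, and then to deduce the second inequality by specialising the first to the case where $X_2$ is (an independent copy of) $-X_1$.

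For the first step, let $p_1(a) = \Pr[X_1 = a]$ and $p_2(b) = \Pr[X_2 = b]$. For any $z \in \mathbb{R}$, independence gives
\[
\Pr[X_1 + X_2 = z] = \sum_a p_1(a)\, p_2(z-a),
\]
where the sum ranges over the (countable) support of $X_1$. By Cauchy--Schwarz,
\[
\Pr[X_1+X_2=z]^2 \le \Bigl(\sum_a p_1(a)^2\Bigr)\Bigl(\sum_a p_2(z-a)^2\Bigr) = \Pr[X_1 = X_1']\cdot \Pr[X_2 = X_2'],
\]
using the standard identity $\Pr[Y=Y'] = \sum_y \Pr[Y=y]^2$ for a discrete random variable $Y$ with an independent copy $Y'$. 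Taking a supremum over $z$ on the left-hand side and then square roots gives the claimed inequality $Q[X_1+X_2] \le \Pr[X_1=X_1']^{1/2}\Pr[X_2=X_2']^{1/2}$.

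For the second step, let $\tilde X_2$ be a random variable independent of $X_1$ with the same distribution as $-X_1$, and let $\tilde X_2'$ be an independent copy of $\tilde X_2$. Then $X_1 + \tilde X_2$ has the same distribution as $X_1 - X_1'$, so $Q[X_1 - X_1'] = Q[X_1 + \tilde X_2]$. Moreover,
\[
\Pr[\tilde X_2 = \tilde X_2'] = \sum_a \Pr[-X_1 = a]^2 = \sum_b \Pr[X_1 = b]^2 = \Pr[X_1 = X_1'],
\]
so applying the first inequality with $X_2 = \tilde X_2$ yields $Q[X_1 - X_1'] \le \Pr[X_1 = X_1']$, as required.

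There is no real obstacle here; the only slightly subtle point is ensuring that the sums $\sum_a p_i(a)^2$ are correctly identified as collision probabilities, which is immediate since the $X_i$ are discrete (so that all sums are over countable supports and Cauchy--Schwarz applies without measurability issues).
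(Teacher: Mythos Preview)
Your proof is correct and follows essentially the same approach as the paper: apply Cauchy--Schwarz to the convolution formula $\Pr[X_1+X_2=z]=\sum_a p_1(a)p_2(z-a)$, identify each $\ell^2$-norm squared as a collision probability, and then specialise to $X_2\overset{d}{=}-X_1$ for the second part.
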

\begin{proof}
We have
\[
Q[X_{1}+X_{2}] =\sup_{x}\sum_{z}\Pr[X_{1}=z]\Pr[z+X_{2}=x] \le\sup_{x}\left(\sum_{z}\Pr[X_{1}=z]^{2}\right)^{1/2}\left(\sum_{z}\Pr[z+X_{2}=x]^{2}\right)^{1/2}
\]
by the Cauchy--Schwarz inequality. But note that (for any $x\in \mb R$)
\begin{align*}
\sum_{z}\Pr[X_1=z]^{2} & =\sum_{z}\Pr[X_1=z]\Pr[X_1'=z]=\Pr[X_1=X_1'],\\
\sum_{z}\Pr[z+X_2=x]^{2} & =\sum_{z}\Pr[X_2=x-z]\Pr[X_2'=x-z]=\Pr[X_2=X_2'],
\end{align*}
and the desired result follows.
\end{proof}

Now, we are ready to prove \cref{cor:relative-halasz-0}.

\begin{proof}[Proof of \cref{cor:relative-halasz-0}]
We may assume that $\Pr[X_{i}=X_{i}']\le p$ for $i=1,\dots,m$. By \cref{fact:anticoncentration-monotonicity} we may furthermore assume that $n=m$ and that $m\ge 2$ is even. Also, note that we may assume that $X_1,\dots,X_m$ are discrete random variables\footnote{Formally: for each $i$, write $A_i=\{x:\Pr[X_i=x]>0\}$ as the ``atomic part'' of $X_i$, and note that for any $z$ we have $\Pr[X_1+\dots+X_n=z\text{ and }X_i\notin A_i\text{ for some }i]=0$. So, if we alter the behaviour of $X_i$ when $X_i\notin A_i$ (to make $X_i$ discrete), this cannot decrease $Q[X_1+\dots+X_m]$. We can make these alterations in a way that has an arbitrarily small impact on probabilities of the form $\Pr[X_i=X_i']$.}.

By the first part of \cref{lem:duplication-0}, we see that $Q[X_1+\dots+X_m]$ is at most
\[\Pr[(X_1+\dots+X_{m/2})=(X_1'+\dots+X_{m/2}')]^{1/2}\cdot \Pr[(X_{m/2+1}+\dots+X_{m})=(X_{m/2+1}'+\dots+X_{m}')]^{1/2}.\]
Then, note that 
\begin{align*}\Pr[(X_1+\dots+X_{m/2})=(X_1'+\dots+X_{m/2}')]&=\Pr[(X_1-X_1')+\dots+(X_{m/2}-X_{m/2}')=0]\\
&\le Q[(X_1-X_1')+\dots+(X_{m/2}-X_{m/2}')].\end{align*}
By the second part of \cref{lem:duplication-0}, for each $i=1,\dots,m$, we have $Q[X_i-X_i']\le \Pr[X_i=X_i']\le p$, so \cref{thm:kesten} yields
\[\Pr[(X_1+\dots+X_{m/2})=(X_1'+\dots+X_{m/2}')]\lesssim \frac{p}{\sqrt{(1-p)(m/2)}}\lesssim \frac{p}{\sqrt{(1-p)m}}.\]
By exactly the same argument, we obtain
\[\Pr[(X_{m/2+1}+\dots+X_{m})=(X_{m/2+1}'+\dots+X_{m}')]\lesssim \frac{p}{\sqrt{(1-p)m}},\]
and the desired result follows.
\end{proof}
Finally, we use \cref{cor:relative-halasz-0} to prove \cref{lem:relative-halasz}.

\begin{proof}[Proof of \cref{lem:relative-halasz}]
Let $h=\lfloor n/k\rfloor$ and let $I_{1},\dots,I_{h}\in\{1,\dots,n\}$
be uniformly random disjoint subsets of exactly $k$ indices (independent from all other random objects in the statement of \cref{lem:relative-halasz}). For
each $j=1,\dots,h$, define 
\[
X_{j}^{*}=\sum_{i\in I_{j}}(X_{i}-X_{i}'),\quad\rho_{j}=\Pr[X_{j}^{*}=0\,|\,I_{j}]=\Pr\Big[\sum_{i\in I_{j}}X_{i}=\sum_{i\in I_{j}}X_{i}'\,\Big|\,I_{j}\Big],
\]
and let $\lambda=1-\Pr[X_K^*=0]>4k/n>2/h$. Note that for each $j=1,\dots,h$, the random variable $X_j^*$ on its own has exactly the same distribution as $X_K^*$. Thus, we have $\E[\rho_{1} +\dots+ \rho_{h}]=h\cdot \Pr[X_K^*=0]=(1-\lambda)h$. So, we can fix an outcome of $I_1,\dots,I_h$ satisfying $\rho_{1}+\dots+\rho_{h}\le (1-\lambda)h$.

Now, we can have at most $(1-\lambda/2)h$ indices $j\in \{1,\dots,h\}$ with $\rho_{j}\ge (1-\lambda)/(1-\lambda/2)$. Thus, there are at least $(\lambda/2)h$ indices $j\in \{1,\dots,h\}$ satisfying $\rho_{j}< (1-\lambda)/(1-\lambda/2)$. 
Then, \cref{fact:anticoncentration-monotonicity} and \cref{cor:relative-halasz-0} (applied with $m=\lceil (\lambda/2)h\rceil \ge 2$ and $p=(1-\lambda)/(1-\lambda/2)<1-\lambda/2$, with the random variables $\sum_{i\in I_{j}}X_{i}$ and their independent copies $\sum_{i\in I_{j}}X_{i}'$ for $j=1,\dots,h$) give \[Q[X] \le Q\Big[\sum_{i\in I_{1}}X_{i}+\dots+\sum_{i\in I_{h}}X_{i}\Big] \lesssim \frac{(1-\lambda)/(1-\lambda/2)}{\sqrt{(\lambda/2)\cdot (\lambda/2) h}} \lesssim  \frac{1}{\sqrt{h}}\cdot \frac{1-\lambda}{\lambda}\lesssim \sqrt{\frac{k}{n}}\cdot \frac{\Pr[X_K^*=0]}{1-\Pr[X_K^*=0]}.\qedhere\]
\end{proof}
Note that \cref{lem:relative-halasz} only applies under the assumption that $\Pr[X_K^*=0]$ is sufficiently far away from 1. We record a corollary with an additional condition that allows us to easily verify this assumption.

\begin{corollary}\label{cor:relative-assumption}
Consider $\gamma>0$ and $0<p<1$, and in the setting of \cref{lem:relative-halasz}, assume that $2/\gamma\le k<(1-p)n/8$ and that we have $Q[X_{i}]\le p$ for at least $\gamma n$ indices $i\in \{1,\dots,n\}$. Then $\Pr[X^*_K=0]\le  (1+p)/2< 1-4k/n$ and consequently
\[
Q[X]\lesssim\sqrt{\frac{k}{n}}\cdot \frac{\Pr[X_{K}^{*}=0]}{1-\Pr[X_{K}^{*}=0]}\lesssim_p \sqrt{\frac{k}{n}}\cdot \Pr[X_{K}^{*}=0].
\]

\end{corollary}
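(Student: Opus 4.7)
My plan is first to establish the claimed point-probability bound $\Pr[X_K^*=0]\le (1+p)/2$; the strict inequality $(1+p)/2<1-4k/n$ is then immediate from the assumption $k<(1-p)n/8$. Granting this, all hypotheses of \cref{lem:relative-halasz} are satisfied, yielding the first displayed inequality in the conclusion. For the second displayed inequality it suffices to observe that $1/(1-\Pr[X_K^*=0])\le 2/(1-p)=O_p(1)$.

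To prove $\Pr[X_K^*=0]\le (1+p)/2$, I would let $S=\{i\in\{1,\dots,n\}:Q[X_i]\le p\}$, so that $|S|\ge \gamma n$ by hypothesis, and split according to whether $K$ meets $S$. On the event $K\cap S\ne \emptyset$, I fix any $i_0\in K\cap S$ and exploit the independence of the symmetrised summands $X_i-X_i'$: by \cref{fact:anticoncentration-monotonicity} and the second part of \cref{lem:duplication-0},
\[
\Pr[X_K^*=0\mid K]\le Q[X_{i_0}-X_{i_0}']\le \Pr[X_{i_0}=X_{i_0}']\le Q[X_{i_0}]\le p,
\]
where the penultimate inequality uses the trivial estimate $\Pr[X_{i_0}=X_{i_0}']=\sum_z \Pr[X_{i_0}=z]^2\le Q[X_{i_0}]$. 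Meanwhile, $K$ is very unlikely to miss $S$: the ratio $\binom{n-|S|}{k}/\binom{n}{k}$ is a product of $k$ factors each of the form $(n-|S|-j)/(n-j)$, which are decreasing in $j$ and hence each at most $1-|S|/n\le 1-\gamma$. Thus $\Pr[K\cap S=\emptyset]\le (1-\gamma)^k\le e^{-\gamma k}\le e^{-2}<1/2$, using $k\ge 2/\gamma$. Combining the two cases gives
\[
\Pr[X_K^*=0]\le \Pr[K\cap S=\emptyset]+\paren{1-\Pr[K\cap S=\emptyset]}\cdot p=p+(1-p)\Pr[K\cap S=\emptyset]\le \tfrac{1+p}{2}.
\]

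I do not anticipate any serious obstacle here: the corollary is essentially a clean packaging of \cref{lem:relative-halasz} together with the two tools already developed in this section (\cref{fact:anticoncentration-monotonicity} and \cref{lem:duplication-0}). The only mildly delicate point is that the numerical hypotheses $2/\gamma\le k<(1-p)n/8$ are tuned precisely so that both $\Pr[K\cap S=\emptyset]\le 1/2$ (giving the main point-probability bound) and $(1+p)/2<1-4k/n$ (unlocking \cref{lem:relative-halasz}) are ensured.
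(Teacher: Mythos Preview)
Your proof is correct and follows essentially the same approach as the paper: split on whether $K$ meets the set of ``good'' indices, bound $\Pr[X_K^*=0\mid K]\le p$ via \cref{fact:anticoncentration-monotonicity} when it does, and combine. The only notable difference is that you bound $\Pr[K\cap S=\emptyset]$ by the direct product estimate $\binom{n-|S|}{k}/\binom{n}{k}\le (1-\gamma)^k\le e^{-\gamma k}\le e^{-2}<1/2$, whereas the paper uses Chebyshev's inequality on the hypergeometric variable $|K\cap S|$ to obtain the weaker bound $1/(\gamma k)\le 1/2$; your route is a bit sharper and arguably cleaner. (As a tiny simplification, your detour through \cref{lem:duplication-0} in the chain $Q[X_{i_0}-X_{i_0}']\le \Pr[X_{i_0}=X_{i_0}']\le Q[X_{i_0}]$ is not needed: $Q[X_{i_0}-X_{i_0}']\le Q[X_{i_0}]$ follows directly from \cref{fact:anticoncentration-monotonicity}.)
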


\begin{proof}
Let $I$ be a set of $\lceil \gamma n\rceil$ indices $i\in \{1,\dots,n\}$ satisfying $Q[X_{i}]\le p$. Then $|K\cap I|$ has a hypergeometric distribution, with mean $\mathbb{E}[|K\cap I|]=|I|\cdot k/n\ge \gamma k$ and variance $\operatorname{Var}[|K\cap I|]\le  |I|\cdot k/n=\mathbb{E}[|K\cap I|]$. So by Chebyshev's inequality, we have $\Pr[K\cap I=\emptyset]\le 1/\mathbb{E}[|K\cap I|]\le 1/(\gamma k)\le 1/2$. Thus, we have $K\cap I\ne \emptyset$ with probability at least $1/2$. For any such outcome of $K$ we have $\Pr[X^*_K=0\,|\,K]\le p$ by \cref{fact:anticoncentration-monotonicity}. So, we can conclude that $\Pr[X^*_K=0]\le (1+p)/2$. Using $k<(1-p)n/8$, it is easy to check that $(1+p)/2< 1-4k/n$, and the rest of the statement is obtained by applying \cref{lem:relative-halasz}.
\end{proof}

\section{Proof of the recursive bound}

In this section we prove \cref{thm:inductive} (and along the way, \cref{prop:easy}). So, let $0<p<1$, and let $A$ be as in \cref{def:A}. Throughout, we will assume without loss of generality that the last $T$ columns of $A_{\mathrm{fix}}$
induce a $T\times T$ submatrix with nonzero permanent. We will also use the following
notation.
\begin{definition}
In the setting of \cref{def:A}:
\begin{compactitem}
\item For a set $I\subseteq\{1,\dots,n+T\}$, let $-I=\{1,\dots,n+T\}\setminus I$
be the complement of $I$.
\item For $i\in I$, let $I-i=I\setminus\{i\}$, and for $i\notin I$, let
$I+i=I\cup\{i\}$.
\item For a set $J\subseteq\{1,\dots,n+T\}$, let $A[J]$ be the $|J|\times|J|$
submatrix of $A$ indexed by the first $|J|$ rows and the columns
indexed by $J$.
\item For $s\in\{1,\dots,n\}$, let $A^{\uparrow s}$ be the $(n+T-s)\times (n+T)$ submatrix of $A$ consisting of the first $n+T-s$ rows.
\item For $s\in\{1,\dots,n\}$, $z\in\mb R$ and $\alpha>0$, let $\mathcal{E}_{z}(s,\alpha)$
be the event that for more than $\alpha\binom{n}{s}$ of the $\binom{n}{s}$
size-$s$ subsets $I\subseteq\{1,\dots,n\}$, we have $\on{per}(A[-I])\ne z$.
We also use the shorthand $\mathcal{E}(s,\alpha)=\mathcal{E}_{0}(s,\alpha)$
and $\mathcal{E}_{z}(s)=\mathcal{E}_{z}(s,0)$ and $\mathcal{E}(s)=\mathcal{E}_{0}(s,0)$.
\end{compactitem}
\end{definition}

Note that the event $\mathcal{E}(s)$ precisely means that the $(n+T-s)\times (n+T)$ matrix $A^{\uparrow s}$ contains some $(n+T-s)\times (n+T-s)$ submatrix with non-zero permanent. In particular, $\mathcal{E}(n)$ does not depend on the randomness of $A_{\mathrm{rand}}$ (since $A^{\uparrow n}=A_{\mathrm{fix}}$), and holds deterministically by the conditions in \cref{def:A}.

The permanent enjoys an analogue of the minor expansion formula for the determinant, which we record below. This fact allows us to study permanents recursively.
\begin{fact}\label{fact:minor-expansion}
Fix a set $J\subseteq\{1,\dots,n\}$ of size $s$. We have
\[
\on{per}(A[-J])=\sum_{\substack{i\in \{1,\dots,n+T\}\\i\notin J}}\xi_{i}\on{per}(A[-(J+i)]),
\]
where $\xi_{1},\dots,\xi_{n+T}$ are the entries of the $(n+T-s)$-th
row of $A$.
\end{fact}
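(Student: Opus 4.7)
The plan is to prove this as the permanent analogue of Laplace's cofactor expansion, applied to the last row of the submatrix $A[-J]$. The only content of the statement is an index-bookkeeping exercise, so the challenge is just to set up the notation carefully enough that the bijection between permutations is transparent.

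First I would set $m = n+T-s = |-J|$, so that $A[-J]$ is an $m \times m$ matrix whose rows are the first $m$ rows of $A$, and whose columns are labelled by the elements of $-J \subseteq \{1,\dots,n+T\}$. In particular, its last (i.e., $m$-th) row is the $(n+T-s)$-th row of $A$ restricted to columns in $-J$, with entries $(\xi_i)_{i \in -J}$. It will be convenient to write the permanent with bijections that have $-J$ itself as their codomain rather than an ordered $\{1,\dots,m\}$; this is a harmless reformulation of \cref{eq:det-per}, since relabelling the columns does not change the permanent.

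Next I would start from
\[
\on{per}(A[-J]) = \sum_{\sigma} \prod_{k=1}^{m} A_{k,\sigma(k)},
\]
where $\sigma$ ranges over all bijections $\{1,\dots,m\} \to -J$, and partition this sum according to the value $i = \sigma(m) \in -J$ assigned to the last row. For each fixed $i \in -J$, the bijections $\sigma$ with $\sigma(m) = i$ are in natural one-to-one correspondence with bijections $\sigma' : \{1,\dots,m-1\} \to -J \setminus \{i\}$ via $\sigma' = \sigma|_{\{1,\dots,m-1\}}$. Since $i \notin J$, we have $-J \setminus \{i\} = -(J+i)$. Therefore
\[
\on{per}(A[-J]) = \sum_{i \in -J} \xi_i \sum_{\sigma'} \prod_{k=1}^{m-1} A_{k,\sigma'(k)} = \sum_{\substack{i \in \{1,\dots,n+T\}\\ i \notin J}} \xi_i \on{per}(A[-(J+i)]),
\]
using that the inner sum runs over bijections $\{1,\dots,m-1\} \to -(J+i)$ and so by definition equals $\on{per}(A[-(J+i)])$ (whose row set is the first $m-1 = n+T-s-1$ rows of $A$ and whose column set is $-(J+i)$).

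There is no real obstacle: the only things to verify are that $m-1 = n+T-|J+i|$ (which is immediate from $i \notin J$, so $|J+i| = s+1$) and that the indexing conventions match between the two sides. Aside from this bookkeeping, the argument is exactly the classical row-expansion for the permanent, which differs from the determinant expansion only by the absence of sign factors.
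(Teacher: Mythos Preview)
Your proof is correct; it is exactly the standard row-expansion argument for the permanent, and your bookkeeping with the column set $-J$ and the bijections $\sigma:\{1,\dots,m\}\to -J$ is accurate. The paper itself does not prove this fact at all---it simply records it as the permanent analogue of the Laplace minor expansion---so there is nothing further to compare.
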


We collect various estimates on probabilities
involving the events $\mathcal{E}_{z}(s,\alpha)$. First, the following simple lemma gives a bound on the probability that $\on{per}(A[-J])=z$ for all size-$(s-1)$ subsets $J\subseteq\{1,\dots,n\}$, given that there is at least one size-$s$ subset $I\subseteq\{1,\dots,n\}$ with  $\on{per}(A[-I])\ne 0$.

\begin{lemma}
\label{lem:easy-bound}For any $s\in\{1,\dots,n\}$, any $z\in\mb R$
and any outcome of $A^{\uparrow s}$ satisfying $\mathcal{E}(s)$,
we have
\[
\Pr\Big[\overline{\mathcal{E}_{z}(s-1)}\,\Big|\,A^{\uparrow s}\Big]\le p^{s}.
\]
\end{lemma}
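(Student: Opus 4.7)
The plan is to use the minor expansion in \cref{fact:minor-expansion} to extract, from $\overline{\mathcal{E}_z(s-1)}$, exactly $s$ independent scalar constraints on the entries of a single random row. Since $\mathcal{E}(s)$ holds for the fixed outcome of $A^{\uparrow s}$, I can fix some $I \subseteq \{1,\dots,n\}$ with $|I| = s$ and $\on{per}(A[-I]) \ne 0$; note that any permanent of the form $\on{per}(A[-I'])$ for $|I'| = s$ depends only on $A^{\uparrow s}$ and so is already determined by the conditioning. For each $i \in I$, applying \cref{fact:minor-expansion} to $J_i := I \setminus \{i\}$ (a size-$(s-1)$ subset of $\{1,\dots,n\}$) and isolating the $j = i$ summand gives
\[
\on{per}(A[-J_i]) \;=\; \xi_i\, \on{per}(A[-I]) \;+\; \sum_{j \in \{1,\dots,n+T\} \setminus I} \xi_j\, \on{per}(A[-(J_i+j)]),
\]
where $\xi_1,\dots,\xi_{n+T}$ denote the entries of the $(n+T-s+1)$-th row of $A$. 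If $\overline{\mathcal{E}_z(s-1)}$ holds then the left-hand side equals $z$, and since $\on{per}(A[-I]) \ne 0$, this equation can be solved for $\xi_i$ as a deterministic function of $z$, of $A^{\uparrow s}$, and of $\{\xi_j : j \notin I\}$.

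To finish, I would further condition on the values $\{\xi_j : j \in \{1,\dots,n+T\} \setminus I\}$. Under this extended conditioning, $\overline{\mathcal{E}_z(s-1)}$ implies that each of the $s$ random variables $\xi_i$ with $i \in I$ must equal one specific prescribed value. Since $s \le n$, the $(n+T-s+1)$-th row lies in the random part of $A$, so its entries are mutually independent with $Q[\,\cdot\,] \le p$; in particular the family $\{\xi_i : i \in I\}$ is independent of everything that has been conditioned on. Hence the probability that all $s$ of the $\xi_i$ take their prescribed values is at most $p^s$, and averaging over the additional conditioning on $\{\xi_j : j \notin I\}$ gives $\Pr[\overline{\mathcal{E}_z(s-1)} \mid A^{\uparrow s}] \le p^s$. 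There is no real obstacle here: the only delicate point is the division by $\on{per}(A[-I])$, which is precisely why $\mathcal{E}(s)$ was assumed, and the $s$ constraints decouple into one equation per $\xi_i$ as soon as the $\xi_j$ for $j \notin I$ are fixed.
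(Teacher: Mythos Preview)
Your proof is correct and follows essentially the same approach as the paper: fix a witness set $I$ for $\mathcal{E}(s)$, condition on the entries $\xi_j$ with $j\notin I$, and use the minor expansion to see that each equation $\on{per}(A[-(I-i)])=z$ pins down $\xi_i$ uniquely (since the coefficient $\on{per}(A[-I])$ is nonzero), yielding the bound $p^s$ by independence.
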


\begin{proof}
Conditioning on the given outcome of $A^{\uparrow s}$, there is some size-$s$ subset
$I\subseteq\{1,\dots,n\}$ satisfying $\on{per}(A[-I])\ne0$. Write $\xi_{1},\dots,\xi_{n+T}$
for the entries of the $(n-s+1)$-th row of $A_{\mathrm{rand}}$,
and reveal $\xi_{i}$ for all $i\in \{1,\dots,n+T\}\setminus I$.

Now, consider all the $s$ permanents of the form $\on{per}(A[-(I-i)])$ for $i\in I$.
It suffices to show that (conditioned on the revealed information, using
only the randomness of $\xi_{i}$ for $i\in I$), the probability
that all these permanents are equal to $z$ is at most $p^{s}$. This
is a fairly immediate consequence of \cref{fact:minor-expansion}, using that $Q[\xi_{i}]\le p$ for all $i\in I$. Indeed, in our conditional probability space, for each $i\in I$ the
random variable $\on{per}(A[-(I-i)])$ only depends on the randomness
of $\xi_{i}$, and at most one of the outcomes of $\xi_{i}$
will cause $\on{per}(A[-(I-i)])\ne z$ (since $\on{per}(A[-I])\ne0$).
\end{proof}

Note that \cref{lem:easy-bound} in particular implies $\Pr[\overline{\mathcal{E}_{z}(s-1)}\,|\,\mathcal{E}(s)]\le p^{s}$ for any $s\in\{1,\dots,n\}$ and any $z\in\mb R$. This is already enough to prove \cref{prop:easy}, as follows.

\begin{proof}[Proof of \cref{prop:easy}]
Consider any $z\in\mb R$.
Recalling that the event $\mathcal{E}(n)$ always holds, we have
\[
\Pr[\on{per}(A)\ne z] =\Pr[\mathcal{E}_{z}(0)]  \ge\Pr\left[\mc E_z(0)\cap \bigcap_{s=1}^n\mathcal{E}(s)\right]=\Pr[\mc E_z(0)\,|\,\mc E(1)]\prod_{s=2}^{n}\Pr[\mathcal{E}(s-1)\,|\,\mathcal{E}(s)]\ge\prod_{s=1}^{n}(1-p^{s}),
\]
by \cref{lem:easy-bound}. The desired result follows.
\end{proof}
The next lemma is a simple application of Markov's inequality. 
\begin{lemma}
\label{lem:markov}For any $s\in\{1,\dots,n\}$ and $0<\alpha<1/2$, we
have 
\[
\Pr\Big[\overline{\mathcal{E}(s,\alpha)}\Big]\le\frac{f_p(n-s)}{1-\alpha}\le(1+2\alpha)f_p(n-s).
\]
\end{lemma}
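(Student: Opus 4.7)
The plan is to apply Markov's inequality to the random variable counting the number of ``bad'' size-$s$ subsets $I$, i.e., $N = |\{I \subseteq \{1,\dots,n\} : |I|=s,\, \on{per}(A[-I]) = 0\}|$. The complement event $\overline{\mathcal{E}(s,\alpha)}$ is exactly $\{N \ge (1-\alpha)\binom{n}{s}\}$, so Markov gives
\[\Pr\big[\overline{\mathcal{E}(s,\alpha)}\big] \;\le\; \frac{\E[N]}{(1-\alpha)\binom{n}{s}} \;=\; \frac{1}{(1-\alpha)\binom{n}{s}} \sum_{\substack{I \subseteq \{1,\dots,n\}\\|I|=s}} \Pr\big[\on{per}(A[-I]) = 0\big].\]
So the main task is to verify that, for each fixed size-$s$ subset $I \subseteq \{1,\dots,n\}$, we have $\Pr[\on{per}(A[-I]) = 0] \le f_p(n-s)$.

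The key observation is that $A[-I]$ is itself a matrix fitting the template of \cref{def:A}, but with $n$ replaced by $n-s$ (and the same $T$). Indeed, $A[-I]$ is an $(n+T-s) \times (n+T-s)$ matrix whose first $T$ rows form the fixed part $A_{\mathrm{fix}}[-I]$ and whose last $n-s$ rows form an independent random part with entries still satisfying the point-probability bound $p$. Crucially, since we assumed that the last $T$ columns of $A_{\mathrm{fix}}$ induce a $T \times T$ submatrix with nonzero permanent, and since $I \subseteq \{1,\dots,n\}$ leaves those last $T$ columns untouched, the fixed part $A_{\mathrm{fix}}[-I]$ still contains a $T \times T$ submatrix with nonzero permanent. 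Hence the definition of $f_p(n-s)$ applies, giving $\Pr[\on{per}(A[-I]) = 0] \le Q[\on{per}(A[-I])] \le f_p(n-s)$.

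Plugging this into the Markov bound yields $\Pr[\overline{\mathcal{E}(s,\alpha)}] \le f_p(n-s)/(1-\alpha)$. The second inequality $1/(1-\alpha) \le 1+2\alpha$ is a trivial calculus check valid for $0 < \alpha < 1/2$ (since $(1+2\alpha)(1-\alpha) = 1 + \alpha - 2\alpha^2 \ge 1$ in that range). I do not anticipate any real obstacle: the entire argument is a one-line Markov bound together with the bookkeeping verification that removing columns from $\{1,\dots,n\}$ preserves both the nonzero-permanent condition on the fixed block and the point-probability condition on the random block.
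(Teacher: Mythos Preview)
Your proposal is correct and takes essentially the same approach as the paper: both apply Markov's inequality to the count of size-$s$ subsets $I\subseteq\{1,\dots,n\}$ with $\on{per}(A[-I])=0$, using that each $A[-I]$ again fits the template of \cref{def:A} (with $n-s$ random rows) so that $\Pr[\on{per}(A[-I])=0]\le f_p(n-s)$. Your write-up simply makes explicit the bookkeeping (in particular, that $I\subseteq\{1,\dots,n\}$ leaves the last $T$ columns of $A_{\mathrm{fix}}$ intact) which the paper leaves implicit.
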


\begin{proof}
If $\overline{\mathcal{E}(s,\alpha)}$ holds, there must be at least $(1-\alpha)\binom{n}{s}$ size-$s$ subsets $I\subseteq\{1,\dots,n\}$ with $\on{per}(A[-I])=0$. For each size-$s$ subset $I\subseteq\{1,\dots,n\}$, the matrix $A[-I]$ has the form in \cref{def:A} (with $n-s$ random rows), so the expected number of such subsets $I\subseteq\{1,\dots,n\}$ satisfying
$\on{per}(A[-I])=0$ is at most $f_p(n-s)\binom{n}{s}$. The desired
result then follows from Markov's inequality.
\end{proof}
Finally, the next lemma is where most of the difficulty lies. It combines the ideas from \cref{lem:easy-bound,lem:markov}, together with \cref{lem:relative-halasz} (our relative anticoncentration inequality).
\begin{lemma}
\label{lem:hard-bound}
Fix $\alpha>0$. For large $n$, any $s\in\{1,\dots,n\}$, and any $z\in\mb R$, we have
\[
\Pr\Big[\overline{\mathcal{E}_{z}(s-1)}\cap\mathcal{E}(s,\alpha)\Big]
\le \Pr\Big[\overline{\mathcal{E}_{z}(s-1,\alpha/(4s))}\cap\mathcal{E}(s,\alpha)\Big]\le \alpha p^{s}f_p(n-s)+\exp(-\Omega_{\alpha,p}(n)).
\]
\end{lemma}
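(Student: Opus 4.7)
The first inequality follows from the trivial containment $\overline{\mathcal{E}_z(s-1)}\subseteq\overline{\mathcal{E}_z(s-1,\alpha/(4s))}$: if every subset $J\in\binom{[n]}{s-1}$ satisfies $\on{per}(A[-J])=z$, then certainly the weaker condition ``at most $\alpha/(4s)\binom{n}{s-1}$ subsets have $\on{per}(A[-J])\ne z$'' holds.

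For the second inequality, I plan a Markov-plus-anticoncentration argument. First, by a double-counting argument, under $\overline{\mathcal{E}_z(s-1,\alpha/(4s))}\cap\mathcal{E}(s,\alpha)$ there are at least $(3\alpha/4)\binom{n}{s}$ \textit{good pivots} $I^*\in\binom{[n]}{s}$, meaning both $\on{per}(A[-I^*])\ne 0$ (so $I^*\in\mathcal{I}^*$) and $\on{per}(A[-(I^*-i)])=z$ for every $i\in I^*$. Indeed, each ``bad'' $J$ (with $\on{per}(A[-J])\ne z$) is contained in at most $n-s+1$ size-$s$ supersets, so bad $J$'s exclude at most $(\alpha/(4s))(n-s+1)\binom{n}{s-1}=(\alpha/4)\binom{n}{s}$ pivots from the pool $\mathcal{I}^*$ of size $\ge\alpha\binom{n}{s}$. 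Markov's inequality on the count $N^{\textup{good}}$ reduces the problem to bounding $\mathbb{E}[N^{\textup{good}}\mathbb{1}_{\mathcal{E}(s,\alpha)}]$.

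For each pivot $I^*$, \cref{lem:easy-bound} gives -- conditional on $A^{\uparrow s}$ with $\on{per}(A[-I^*])\ne 0$ -- probability at most $p^s$ that all $s$ constraints $\on{per}(A[-(I^*-i)])=z$ hold, over the randomness of the $(n+T-s+1)$-th row. To extract an additional factor of $f_p(n-s)$, I apply \cref{cor:relative-assumption} to one of the constraints, writing $\on{per}(A[-(I^*-i_0)])=\sum_i\xi_i c_i$ with $c_i=\on{per}(A[-(I^*-i_0+i)])$. The thinned sum $X_K^*=\sum_{i\in K}(\xi_i-\xi_i')c_i$ equals the permanent of an $(n+T-s+1)\times(n+T-s+1)$ matrix $\bar A[-(I^*-i_0)]$ whose last row is sparsified to have entries $\xi_i-\xi_i'$ at columns $K$ and zero elsewhere. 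Treating the first $T$ rows of $A_{\textup{fix}}$ together with this sparsified last row as a ``fixed part'' of $T+1$ rows, \cref{def:A} (with $T'=T+1$, $n'=n-s$) yields $\Pr[X_K^*=0]\le f_p(n-s)$ provided the fixed part contains a nonzero $(T+1)\times(T+1)$ submatrix; by the WLOG assumption $\on{per}(A_{\textup{fix}}[\{n+1,\dots,n+T\}])\ne 0$, non-degeneracy fails only when $\xi_i=\xi_i'$ for every $i\in K\cap([n]\setminus(I^*-i_0))$, which happens with probability at most $p^{|K\cap([n]\setminus(I^*-i_0))|}$. Choosing $|K|=\Theta_{\alpha,p}(n)$ (subject to the constraint $|K|<(1-p)|L|/8$ in \cref{cor:relative-assumption}) makes this degenerate-case error $\exp(-\Omega_{\alpha,p}(n))$.

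The main technical obstacle is to combine the $p^s$ factor (from \cref{lem:easy-bound}, which uses the randomness of the $(n+T-s+1)$-th row to force $\xi_i$ for $i\in I^*$ to take specific values) with the $f_p(n-s)$ factor (from the interpretation of $X_K^*$ as a permanent of a matrix whose $n-s$ random rows lie below the $(n+T-s+1)$-th row of $A$) so that they combine \emph{multiplicatively}. This is conceptually plausible because the two sources of anticoncentration exploit disjoint blocks of randomness, but making the bookkeeping rigorous -- and correctly tracking the $\alpha$ factor through the Markov reduction so that the final bound reads $\alpha p^s f_p(n-s)$ rather than the weaker $p^s f_p(n-s)/\alpha$ produced by a naïve single-pivot Markov -- is the delicate part, likely requiring a finer averaging (perhaps weighting by $|\mathcal{I}^*|$ or counting compatible pairs).
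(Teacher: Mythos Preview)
Your high-level plan matches the paper's: a Markov reduction to ``pivots'', the minor expansion (\cref{fact:minor-expansion}), relative anticoncentration via \cref{cor:relative-assumption}, and then recursion to $f_p(n-s)$ by reinterpreting the thinned sum $X_K^*$ as the permanent of a matrix in \cref{def:A} with one extra fixed row. But the mechanism you sketch for multiplying the factors has two genuine gaps.

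First, the ``disjoint blocks of randomness'' picture is wrong as stated. The \cref{lem:easy-bound}-style argument for $p^s$ works by \emph{conditioning} on all entries $\xi_j$ with $j\notin I^*$, after which the $s$ constraints decouple. But \cref{cor:relative-assumption} applied to $\on{per}(A[-(I^*-i_0)])=\sum_{j\notin I^*-i_0}\xi_j c_j$ needs precisely those $\xi_j$ to be random. Done carefully, the two steps only yield $p^{s-1}$ (from the entries $\xi_i$, $i\in I^*\setminus\{i_0\}$) times $C_p\sqrt{k/(n-s+1)}\cdot\Pr[X_K^*=0\mid A^{\uparrow s}]$; you never get a clean $p^s$. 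The paper's resolution---which simultaneously fixes your $\alpha$-in-denominator worry---is that $k$ is a free parameter: taking $k=\lfloor\nu(n-s+1)\rfloor$ with $\sqrt\nu\le(\alpha^2/4)(p/C_p)$, the factor $C_p\sqrt{k/(n-s+1)}$ absorbs both the missing $p$ and two powers of $\alpha$, so after the $O(1/\alpha)$ loss from Markov one is left with $\alpha p^s$. You never exploit this $\sqrt{k/n}$ slack; your only stated reason for $|K|=\Theta_{\alpha,p}(n)$ is the degeneracy error.

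Second, to invoke \cref{cor:relative-assumption} you need $\ge\gamma(n-s+1)$ indices $j$ with $c_j=\on{per}(A[-(I^*-i_0+j)])\ne0$; equivalently, the $(s-1)$-set $J:=I^*-i_0$ must have many size-$s$ extensions in $\mathcal F=\{I:\on{per}(A[-I])\ne0\}$. Under $\mathcal E(s,\alpha)$ one only knows $|\mathcal F|>\alpha\binom{n}{s}$, and an arbitrary $J$ may have as few as one such extension, so picking $i_0$ blindly fails. The paper handles this with a short set-family lemma (\cref{lem:set-family-new}): restricting to pairs $(I,J)$ with $J\subset I\in\mathcal F$ and $\ge(\alpha/2)(n-s+1)$ extensions of $J$ in $\mathcal F$ still leaves $\ge(\alpha/2)s\binom{n}{s}$ pairs, enough for both the Markov step and the hypothesis of \cref{cor:relative-assumption}. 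Your closing remark about ``counting compatible pairs'' is the right instinct; this is exactly what is needed.
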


In our proof of \cref{lem:hard-bound} we will use the following simple fact about dense set families.

\begin{lemma}
\label{lem:set-family-new}For $n\in\mathbb N$, $s\in \{1,\dots,n\}$ and $\alpha>0$, let $\mathcal{F}$ be a collection of at least
$\alpha\binom{n}{s}$ size-$s$ subsets of $\{1,\dots,n\}$. Then there is a collection $\mathcal{H}$ of $|\mathcal{H}|\ge(\alpha/2)s\binom{n}{s}$ pairs of the form $(F,G)$ with $F\in \mathcal{F}$ and $G\subseteq F$ of size $|G|=s-1$, such that for each $(F,G)\in \mathcal{H}$, there are at least $(\alpha/2)(n-s+1)$ sets $F'\in \mathcal{F}$ satisfying $(F',G)\in \mathcal{H}$.
\end{lemma}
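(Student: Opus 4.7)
The plan is a standard double-counting / deletion argument applied to the bipartite incidence between $\mathcal{F}$ and its shadow in the $(s-1)$-level of the Boolean lattice. The required statement is essentially the observation that, after discarding pairs through a low-degree vertex in the shadow, one still retains a positive fraction of the pairs.

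First, I would let $\mathcal{H}_0$ be the collection of \emph{all} pairs $(F,G)$ with $F\in\mathcal{F}$, $G\subseteq F$, and $|G|=s-1$. Since each $F$ contributes exactly $s$ such pairs, we have $|\mathcal{H}_0|=s|\mathcal{F}|\ge \alpha s\binom{n}{s}$. For each size-$(s-1)$ subset $G\subseteq\{1,\dots,n\}$, set
\[
d(G)=\bigl|\{F'\in\mathcal{F}:G\subseteq F'\}\bigr|,
\]
and call $G$ \emph{good} if $d(G)\ge (\alpha/2)(n-s+1)$, and \emph{bad} otherwise. Note that $d(G)\le n-s+1$ trivially.

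I would then define $\mathcal{H}\subseteq\mathcal{H}_0$ to consist of exactly those pairs $(F,G)$ for which $G$ is good. The required ``regularity'' property then holds automatically: for any $(F,G)\in\mathcal{H}$, the set $G$ is good, so there are at least $(\alpha/2)(n-s+1)$ sets $F'\in\mathcal{F}$ with $G\subseteq F'$, and each such $F'$ yields a pair $(F',G)\in\mathcal{H}$ (since goodness depends only on $G$, not on the partner $F$).

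For the size bound, the number of pairs removed when passing from $\mathcal{H}_0$ to $\mathcal{H}$ is
\[
\sum_{G\text{ bad}}d(G)\;<\;\binom{n}{s-1}\cdot\tfrac{\alpha}{2}(n-s+1)\;=\;\tfrac{\alpha}{2}\,s\binom{n}{s},
\]
using the standard identity $\binom{n}{s-1}(n-s+1)=s\binom{n}{s}$. Hence
\[
|\mathcal{H}|\;\ge\;\alpha\, s\binom{n}{s}-\tfrac{\alpha}{2}\,s\binom{n}{s}\;=\;\tfrac{\alpha}{2}\,s\binom{n}{s},
\]
which is the desired bound. I do not anticipate any real obstacle here; the only thing to verify is the combinatorial identity $\binom{n}{s-1}(n-s+1)=s\binom{n}{s}$, which follows immediately by expanding both sides.
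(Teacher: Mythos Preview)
Your proof is correct and essentially identical to the paper's: both set up the incidence between $\mathcal{F}$ and the $(s-1)$-shadow, count $s|\mathcal{F}|\ge\alpha s\binom{n}{s}$ total pairs, discard those through low-degree $G$'s (using $\binom{n}{s-1}(n-s+1)=s\binom{n}{s}$ to bound the loss by $(\alpha/2)s\binom{n}{s}$), and take $\mathcal{H}$ to be the remaining pairs. The only cosmetic difference is that the paper phrases this in bipartite-graph language (edges and degrees) rather than your pair/``good $G$'' language.
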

\begin{proof}
Consider the bipartite graph whose vertices on the left side correspond to the sets $F\in \mathcal{F}$ and whose vertices on the right side correspond to  the $\binom{n}{s-1}$ different size-$(s-1)$ subsets $G\subseteq \{1,\dots,n\}$. We draw an edge between a vertex $F$ on the left and a vertex $G$ on the right if and only if $G\subseteq F$. Then the graph has $s\cdot |\mathcal{F}|\ge \alpha s\binom{n}{s}$ edges. There can be at most $\binom{n}{s-1}\cdot (\alpha/2)(n-s+1)=(\alpha/2)s\binom{n}{s}$ edges $(F,G)$ with $\deg(G)\le (\alpha/2)(n-s+1)$. Thus, the graph has at least $(\alpha/2)s\binom{n}{s}$ edges $(F,G)$ with $\deg(G)> (\alpha/2)(n-s+1)$. We can now take $\mathcal{H}$ to be the set of these edges.
\end{proof}

Now we prove \cref{lem:hard-bound}.

\begin{proof}[Proof of \cref{lem:hard-bound}]Let $C_p$ be a constant depending only on $p$ such that in the conclusion of \cref{cor:relative-assumption} we always have $Q[X]\le C_p \cdot \sqrt{k/n}\cdot \Pr[X_{K}^{*}=0]$. Let us fix some small $\nu>0$ depending on $\alpha$ and $p$ such that $\nu< (1-p)/8$ and $\sqrt{\nu}\le (\alpha^2/4)\cdot (p/C_p)$.

The first inequality in the statement of \cref{lem:hard-bound} follows from the fact that the event $\mathcal{E}_{z}(s-1)=\mathcal{E}_{z}(s-1,0)$ contains the event $\mathcal{E}_{z}(s-1,\alpha/(4s))$. So it suffices to prove the second inequality. Due to the asymptotic term $\exp(-\Omega_{\alpha,p}(n))$ in the statement, we may assume that $n$ is large with respect to $p,\alpha,\nu$.

Let $\xi_{1},\dots,\xi_{n+T}$ be the entries of the $(n+T-s+1)$-th row of $A$. 
Our strategy is to use \cref{cor:relative-assumption} to estimate the probability
of the event $\overline{\mathcal{E}_{z}(s-1,\alpha/(4s))}$, conditioned on an outcome
of $A^{\uparrow s}$ such that $\mathcal{E}(s,\alpha)$ holds (using the randomness of $\xi_{1},\dots,\xi_{n+T}$). Then, we will sum this estimate over all outcomes of $A^{\uparrow s}$ satisfying $\mathcal{E}(s,\alpha)$ to obtain a bound for the probability of $\overline{\mathcal{E}_{z}(s-1,\alpha/(4s))}\cap \mc E(s,\alpha)$.  In order to bound the probability of the event $\overline{\mathcal{E}_{z}(s-1,\alpha/(4s))}$ conditioned on an outcome of $A^{\uparrow s}$, we will consider the events $\on{per}(A[-I])\ne0$ and $\on{per}(A[-J])\ne0$ for various choices of nested subsets $J\su I\su \{1,\dots,n\}$ of sizes $|I|=s$ and $|J|=s-1$. Let $(I^*,J^*)$ be a uniformly random  pair of subsets $J^*\su I^*\su \{1,\dots,n\}$ with $|I^*|=s$ and $|J^*|=s-1$, independent from the random matrix $A$ (note that the total number of such subset pairs is exactly $s\binom{n}{s}$).

\medskip\noindent\textbf{Step 1: Reducing to a linear anticoncentration problem.} Consider an outcome of $A^{\uparrow s}$ such that $\mathcal{E}(s,\alpha)$ holds. Let $\mathcal{F}$ be the family of all size-$s$ subsets $I\subseteq \{1,\dots,n\}$ with $\on{per}(A[-I])\ne0$ (note that this family $\mathcal{F}$ is determined by the outcome of $A^{\uparrow s}$). By the definition of the event $\mathcal{E}(s,\alpha)=\mathcal{E}_0(s,\alpha)$, we have $|\mathcal{F}|> \alpha\binom{n}{s}$. Thus, by \cref{lem:set-family-new} we can find a collection $\mathcal{H}$ of at least $(\alpha/2)s\binom{n}{s}$ pairs $(I,J)$ with $I\in \mathcal{F}$ and $J\su I$ of size $|J|=s-1$, such that for each $(I,J)\in \mathcal{H}$, there are at least $(\alpha/2)(n-s+1)$ sets $I'\in \mathcal{F}$ satisfying $(I',J)\in \mathcal{H}$.

Let us say that a set $I\in \mathcal{F}$ is \emph{parental} if it has a size-$(s-1)$ subset $I'\su I$ with $\on{per}(A[-I'])\ne z$ (we can think of $I'$ as a \emph{child} of $I$). Note that for each size-$(s-1)$ subset $I'\su \{1,\dots,n\}$ there can be at most $n-s+1$ different sets $I\in \mathcal{F}$ with $I'\su I$ (so every child has at most $n-s+1$ parents). In particular, the number of size-$(s-1)$ subsets $I'\su \{1,\dots,n\}$ with $\on{per}(A[-I'])\ne z$ is at least
\[\frac{|\{I\in \mathcal{F}: I\text{ parental}\}|}{n-s+1}\ge \frac{|\{(I,J)\in \mathcal{H}: I\text{ parental}\}|}{s(n-s+1)},\]
observing that for every $I\in \mathcal{F}$ there can be at most $s$ different sets $J$ with $(I,J)\in \mathcal{H}$.

Thus, if for more than half of the pairs $(I,J)\in \mathcal{H}$ the set $I$ is parental, we have more than
\[\frac{|\mathcal{H}|/2}{s(n-s+1)}\ge \frac{(\alpha/4)s\binom{n}{s}}{s(n-s+1)}=\frac{(\alpha/4)(n-s+1)\binom{n}{s-1}}{s(n-s+1)}=\frac{\alpha}{4s}\cdot \binom{n}{s-1}\]
different size-$(s-1)$ subsets $I'\su \{1,\dots,n\}$ with $\on{per}(A[-I'])\ne z$, so the event $\mathcal{E}_{z}(s-1,\alpha/(4s))$ holds.

Therefore, we can conclude \begin{align}
\Pr\Big[\overline{\mathcal{E}_{z}(s-1,\alpha/(4s))}\,\Big|\,A^{\uparrow s}\Big]&\le\Pr\Big[I\text{ is not parental for at least half of the pairs } (I,J)\in \mathcal{H}\,\Big|\,A^{\uparrow s}\Big]\notag \\
&\le\frac{\mb E\big[|\{(I,J)\in \mathcal{H}: I\text{ not parental}\}|\,\big|\,A^{\uparrow s}\big]}{|\mathcal{H}|/2}\notag\\
&=\frac{2}{|\mathcal{H}|}\sum_{(I,J)\in \mathcal{H}}\Pr\Big[I\text{ not parental}\,\Big|\,A^{\uparrow s}\Big]\notag\\
&=2\cdot \Pr\Big[I^{*}\text{ not parental}\,\Big|\,A^{\uparrow s}, \,\mathcal{E}_\mathcal{H}\Big],\label{eq:tricky-markov}
\end{align}
where $\mathcal{E}_\mathcal{H}$ denotes the event that $(I^*,J^{*})\in \mathcal{H}$ (noting that conditional on this event, the pair $(I^*,J^*)$ is a unformly random pair in $\mathcal{H}$). Now, note that
\begin{equation}
\Pr\Big[I^{*}\text{ not parental}\,\Big|\,A^{\uparrow s}, \,\mathcal{E}_\mathcal{H}\Big]\le \Pr\Big[\on{per}(A[-J^{*}])=z\,\Big|\,A^{\uparrow s}, \,\mathcal{E}_\mathcal{H}\Big]\cdot p^{s-1}.\label{eq:bonus-2s}
\end{equation}
The justification of \cref{eq:bonus-2s} is very similar to the proof of \cref{lem:easy-bound} (recalling \cref{fact:minor-expansion}). If we reveal $A^{\uparrow s}$ and an outcome $(I^{*},J^{*})\in \mathcal{H}$, and we reveal $\xi_{i}$ for $i\in \{1,\dots,n+T\}\setminus J^{*}$, then this already determines $\on{per}(A[-J^{*}])$. If $\on{per}(A[-J^{*}])\ne z$ then $I^{*}$ is automatically parental. Furthermore, in the resulting conditional probability space given this revealed information, for each $i\in J^{*}$ the random variable $\on{per}(A[-(I^{*}-i)])$ only
depends on the randomness of $\xi_{i}$, and at most one of the possible outcomes of $\xi_{i}$ will cause $\on{per}(A[-(I^{*}-i)])= z$ (here we are using that $\on{per}(A[-I^{*}])\ne 0$ as $I^*\in \mathcal{F}$). We need $\on{per}(A[-(I^{*}-i)])= z$ for all $i\in J^{*}$  for $I^{*}$ not to be parental, and we recall our assumption $Q[\xi_{i}]\le p$.

Combining \cref{eq:tricky-markov} and \cref{eq:bonus-2s} yields
\begin{equation}
\Pr\Big[\overline{\mathcal{E}_{z}(s-1,\alpha/(4s))}\,\Big|\,A^{\uparrow s}\Big]\le 2p^{s-1} \cdot \Pr\Big[\on{per}(A[-J^{*}])=z\,\Big|\,A^{\uparrow s}, \,\mathcal{E}_\mathcal{H}\Big]
    \label{eq:comb-of-first-two}
\end{equation}
for any outcome of $A^{\uparrow s}$ satisfying $\mc E(s,\alpha)$.

Note that this in particular implies $\Pr[\overline{\mathcal{E}_{z}(s-1,\alpha/(4s))}\,|\,A^{\uparrow s}]\le 2p^{s-1}$. If $s\ge n/2$, then averaging this over the possible outcomes of $A^{\uparrow s}$ satisfying $\mc E(s,\alpha)$ already yields the bound \[\Pr\Big[\overline{\mathcal{E}_{z}(s-1,\alpha/(4s))}\cap \mc E(s,\alpha)\Big]\le \Pr\Big[\overline{\mathcal{E}_{z}(s-1,\alpha/(4s))}\,\Big|\, \mc E(s,\alpha)\Big]\le 2 p^{s-1}\le 2 p^{n/2-1}=\exp(-\Omega_{p}(n)).\]
Therefore for the rest of the proof we may assume $s<n/2$.

Now, note that if we reveal $A^{\uparrow s}$ and $(I^{*},J^{*})$,
then in the resulting conditional probability space, $\on{per}(A[-J^{*}])$
is a linear function of $\xi_{1},\dots,\xi_{n+T}$ (by \cref{fact:minor-expansion}). For every $i\in \{1,\dots,n+T\}\setminus J^{*}$, the coefficient of $\xi_{i}$ in this linear function is precisely $\on{per}(A[-(J^{*}+i)])$. 
We will apply our relative anticoncentration result in \cref{cor:relative-assumption} to this linear function (or, more precisely, to the linear function obtained by omitting the last $T$ terms).

\medskip\noindent \textbf{Step 2: Applying the relative anticoncentration inequality.}
Let $k=\lfloor \nu (n-s+1)\rfloor$, and let $A^*$ be a random matrix obtained from $A[-J^*]$ as follows. First, for every entry $\xi_i$ in the $(n+T-s+1)$-th row of $A[-J^*]$, replace this entry with $\xi_i-\xi_i'$, where $\xi_i'$ is an independent copy of $\xi_i$. Then, choose a uniformly random subset $K\subseteq\{1,\dots,n\}\setminus J^*$ of exactly $k$ of the first $n-s+1$ column indices of $A[-J^*]$, and set all the entries in the $(n-s+1)$-th row to zero except those indexed by $K$.

\begin{claim}\label{claim:step-2}
For any fixed outcomes of $A^{\uparrow s}$ and $(I^{
*},J^{*})\in\mathcal{H}$, we have
\[\Pr\Big[\on{per}(A[-J^{*}])=z\,\Big|\,A^{\uparrow s},(I^{*},J^{*})\Big]\le C_p\cdot  \sqrt{\nu}\cdot \Pr\Big[\on{per}(A^*)=0\,\Big|\,A^{\uparrow s},(I^{*},J^{*})\Big].\]
\end{claim}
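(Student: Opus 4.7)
The plan is to apply the relative anticoncentration inequality from Corollary~\ref{cor:relative-assumption} to a linear expansion of $\on{per}(A[-J^*])$ along the $(n+T-s+1)$-th row of $A$. Fix outcomes of $A^{\uparrow s}$ and of $(I^*,J^*)\in\mathcal{H}$, and write $\xi_1,\dots,\xi_{n+T}$ for the entries of that row. For each $i\in\{1,\dots,n+T\}\setminus J^*$, set $a_i := \on{per}(A[-(J^*+i)])$; since $A[-(J^*+i)]$ involves only the first $n+T-s$ rows, every such coefficient is determined by the conditioning. By Fact~\ref{fact:minor-expansion},
\[\on{per}(A[-J^*]) \;=\; \underbrace{\sum_{i\in\{1,\dots,n\}\setminus J^*}\!\!\xi_i\, a_i}_{=:\,W} \;+\; \underbrace{\sum_{i=n+1}^{n+T}\xi_i\, a_i}_{=:\,Z},\]
and as $W$ and $Z$ depend on disjoint sets of the (independent) variables $\xi_i$, Fact~\ref{fact:anticoncentration-monotonicity} yields $\Pr[\on{per}(A[-J^*])=z\mid A^{\uparrow s},(I^*,J^*)]\le Q[W\mid A^{\uparrow s},(I^*,J^*)]$.

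Next, I would apply Corollary~\ref{cor:relative-assumption} to the $n' := n-s+1$ random variables $Y_l := \xi_{I_l}a_{I_l}$, where $\{I_1,\dots,I_{n'}\}=\{1,\dots,n\}\setminus J^*$, using the random subset $K$ of size $k=\lfloor\nu n'\rfloor$. Taking $\gamma := \alpha/2$, I need to verify: (i) $2/\gamma\le k<(1-p)n'/8$, and (ii) at least $\gamma n'$ of the $Y_l$ satisfy $Q[Y_l]\le p$. Whenever $a_{I_l}\ne 0$, we have $Q[Y_l]=Q[\xi_{I_l}]\le p$, and the property of $\mathcal{H}$ extracted in Step~1 guarantees at least $(\alpha/2)(n-s+1)=\gamma n'$ indices $i\in\{1,\dots,n\}\setminus J^*$ with $J^*+i\in\mathcal{F}$ (equivalently $a_i\ne 0$), proving~(ii). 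For~(i), the choice $\nu<(1-p)/8$ gives $k<(1-p)n'/8$, and $n'>n/2$ (by the earlier reduction to $s<n/2$) combined with $n$ being large gives $k\ge 4/\alpha=2/\gamma$.

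Corollary~\ref{cor:relative-assumption} then yields $Q[W\mid A^{\uparrow s},(I^*,J^*)]\le C_p\sqrt{k/n'}\cdot\Pr[W_K^*=0\mid A^{\uparrow s},(I^*,J^*)]$, where $W_K^* := \sum_{i\in K}(\xi_i-\xi_i')a_i$. Expanding $\on{per}(A^*)$ along its last row --- whose only nonzero entries are $\xi_i-\xi_i'$ for $i\in K$, with cofactor $a_i$ --- shows $\on{per}(A^*)=W_K^*$, and $\sqrt{k/n'}\le\sqrt{\nu}$ by construction, giving the claimed bound. The main delicate point is verifying hypothesis~(ii): Corollary~\ref{cor:relative-assumption} demands that many \emph{individual} summands $Y_l$ be anticoncentrated, and this is precisely what Step~1's passage from $\mathcal{F}$ to $\mathcal{H}$ was engineered to guarantee (the defining property of $\mathcal{H}$ ensures that, for this particular $J^*$, many sibling sets $I$ with $(I,J^*)\in\mathcal{H}$ belong to $\mathcal{F}$, which translates directly into many indices $i$ with $a_i\ne 0$).
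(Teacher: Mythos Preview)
Your proof is correct and follows essentially the same approach as the paper: you expand $\on{per}(A[-J^*])$ along its last row, drop the last $T$ terms via \cref{fact:anticoncentration-monotonicity}, and apply \cref{cor:relative-assumption} with $\gamma=\alpha/2$ to the remaining $n-s+1$ summands, verifying the hypotheses exactly as the paper does. The only differences are notational (you name the partial sums $W,Z$ and the coefficients $a_i$, whereas the paper works directly with the variables $X_i=\on{per}(A[-(J^*+i)])\xi_i$).
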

\begin{proof}
As in the claim statement, let us fix outcomes of $A^{\uparrow s}$ and $(I^*,J^*)\in \mc{H}$, and consider the conditional probability space obtained by conditioning on these outcomes. 
For a random variable $Y$, we write $Q[Y\,|\,A^{\uparrow s},(I^{*},J^{*})]=\sup_{y}\Pr[Y=y\,|\,A^{\uparrow s},(I^{*},J^{*})]$.

As $(I^{*},J^{*})\in\mathcal{H}$, there are at least $(\alpha/2)(n-s+1)$ different sets $I'\in \mathcal{F}$ with $(I',J^{*})\in\mathcal{H}$. For all of these sets $I'$ we have $J^{*}\su I'$ and $\on{per}(A[-I'])\ne 0$. In other words, there are at least $(\alpha/2)(n-s+1)$ indices $i\in \{1,\dots,n\}\setminus J^{*}$ such that $\on{per}(A[-(J^{*}+i)])\ne 0$.

Now, let us define $X_i=\on{per}(A[-(J^{*}+i)])\cdot \xi_{i}$ for all $i\in \{1,\dots,n+T\}\setminus J^{*}$. Here, the coefficient $\on{per}(A[-(J^{*}+j)])$ is already determined by $A^{\uparrow s}$ and $(I^{*},J^{*})$, so after revealing $A^{\uparrow s}$ and $(I^{*},J^{*})$, the random variable $X_i$ is a rescaling of $\xi_{i}$. Thus, the random variables $X_i$ for $i\in \{1,\dots,n+T\}\setminus J^{*}$ are independent in the conditional conditional probability space obtained by revealing $A^{\uparrow s}$ and $(I^{*},J^{*})$. Furthermore, by \cref{fact:minor-expansion} we have $\on{per}(A[-J^{*}])=\sum_{i\in \{1,\dots,n+T\}\setminus J^{*}}X_i$. We can now apply \cref{cor:relative-assumption} with $\gamma=\alpha/2$ and the random variables $X_i$ for $i\in \{1,\dots,n\}\setminus J^{*}$. At least $(\alpha/2)(n-s+1)$ of these random variables satisfy $Q[X_i\,|\,A^{\uparrow s},(I^{*},J^{*})]\le p$, since this holds for all indices $i\in \{1,\dots,n\}\setminus J^{*}$ such that $\on{per}(A[-(J^{*}+i)])\ne 0$ (also noting that $k=\lfloor \nu(n-s+1)\rfloor\ge \lfloor \nu n/2\rfloor\ge 4/\alpha$ if $n$ is sufficiently large in terms of $p$, $\alpha$ and $\nu$). This yields
\begin{align*}
Q\Big[\on{per}(A[-J^{*}])\,\Big|\,A^{\uparrow s},(I^{*},J^{*})\Big]&=Q\Bigg[\sum_{i\in \{1,\dots,n+T\}\setminus J^{*}}\!\!\!\!\!\!\!\!X_i\,\Bigg|\,A^{\uparrow s},(I^{*},J^{*})\Bigg]\\
&\le Q\Bigg[\sum_{i\in \{1,\dots,n\}\setminus J^{*}}\!\!\!\!\!\!\!\!X_i\,\Bigg|\,A^{\uparrow s},(I^{*},J^{*})\Bigg]\\
&\le C_p\cdot \sqrt{\frac{k}{n-s+1}}\cdot \Pr\Big[\on{per}(A^*)=0\,\Big|\,A^{\uparrow s},(I^{*},J^{*})\Big].
\end{align*}
Here, for the first inequality we used \cref{fact:anticoncentration-monotonicity}, and for the second inequality we used \cref{cor:relative-assumption}, noting that $\on{per}(A^*)=\sum_{i\in K}\on{per}(A[-(J^{*}+i)])(\xi_{i}-\xi'_{i})=\sum_{i\in K} (X_i-X'_i)$ if we define $X'_i=\on{per}(A[-(J^{*}+i)])\cdot \xi'_{i}$ for $i\in \{1,\dots,n\}\setminus J^{*}$ (then $X'_i$ is an independent copy of $X_i$ in this conditional probability space). Recalling $k\le \nu (n-s+1)$, we obtain the desired inequality.
\end{proof}
Averaging the inequality in \cref{claim:step-2} over all possible outcomes of $(I^{*},J^{*})\in \mathcal{H}$ yields
\begin{equation}
\Pr\Big[\on{per}(A[-J^{*}])=z\,\Big|\,A^{\uparrow s},\, \mathcal{E_H}\Big]\le C_p\cdot \sqrt{\nu}\cdot \Pr\Big[\on{per}(A^*)=0\,\Big|\,A^{\uparrow s},\, \mathcal{E_H}\Big].
\label{eq:initial-conditioning}
\end{equation}
Now, for every fixed outcome of $A^{\uparrow s}$ satisfying $\mathcal{E}(s,\alpha)$, recall that $|\mathcal{H}|\ge (\alpha/2)s\binom{n}{s}$, and that $(I^*,J^*)$ is independent from $A^{\uparrow s}$. This implies
\[\Pr\Big[\mathcal{E_H}\,\Big|\,A^{\uparrow s}\Big]=\Pr\Big[(I^{*},J^{*})\in \mathcal{H}\,\Big|\,A^{\uparrow s}\Big]= \frac{|\mathcal{H}|}{s\binom{n}{s}}\ge \alpha/2.\]
Therefore we obtain
\[
\Pr\Big[\on{per}(A^*)=0\,\Big|\,A^{\uparrow s}\Big]\ge \Pr\Big[\on{per}(A^*)=0\text{ and }\mathcal{E_H}\,\Big|\,A^{\uparrow s}\Big]\ge \frac{\alpha}{2}\cdot \Pr\Big[\on{per}(A[-J^{*}])=z\,\Big|\,A^{\uparrow s},\, \mathcal{E_H}\Big].
\]
Combining this with \cref{eq:comb-of-first-two} and \eqref{eq:initial-conditioning} yields
\begin{align*}
\Pr\Big[\overline{\mathcal{E}_{z}(s-1,\alpha/(4s))}\,\Big|\,A^{\uparrow s}\Big]&\le 2C_p\sqrt{\nu} p^{s-1} \cdot \Pr\Big[\on{per}(A^*)=0\,\Big|\,A^{\uparrow s},\, \mathcal{E_H}\Big]\\
&\le \frac{4C_p\sqrt{\nu}}{\alpha} p^{s-1} \cdot \Pr\Big[\on{per}(A^*)=0\,\Big|\,A^{\uparrow s}\Big]\le \alpha p^{s}\cdot  \Pr\Big[\on{per}(A^*)=0\,\Big|\,A^{\uparrow s}\Big]
\end{align*}
for every outcome of $A^{\uparrow s}$ satisfying $\mc E(s,\alpha)$, where for the last inequality we used $\sqrt{\nu}\le (\alpha^2/4)\cdot (p/C_p)$.

Summing over the possible outcomes of $A^{\uparrow s}$ satisfying $\mc E(s,\alpha)$, 
we deduce that
\begin{equation}\label{eq:summary-step-3}
 \Pr\Big[\overline{\mathcal{E}_{z}(s-1,\alpha/(4s))}\cap\mc E(s,\alpha)\Big]\le \alpha p^{s}\cdot \Pr\Big[\on{per}(A^*)=0\cap\mc E(s,\alpha)\Big]\le \alpha p^{s}\cdot \Pr\Big[\on{per}(A^*)=0\Big].
\end{equation}

\medskip\noindent \textbf{Step 3: Recursion.} 
Now, note that the permanent is unaffected by rearranging rows. Let $A'$ be obtained from $A^*$ by moving row $(n+T-s+1)$ (the row we modified to get $A^*$ from $A[-J^*]$) directly after row $T$, to become the new $(T+1)$-th row. Note that the last $T$ entries of the $(n+T-s+1)$-th row of $A^*$ are always zero, hence the last $T$ entries of the $(T+1)$-th row of $A'$ are always zero (and also note that the $T\times T$ submatrix of $A'$ in the first $T$ rows and last $T$ columns has a non-zero permanent, since this is precisely the submatrix in the last $T$ columns of $A_{\mathrm{fix}}$).

Suppose we reveal an outcome of $J^{*}$
and an outcome of the $(T+1)$-th row of $A'$ which is not
all zero. Then, the first $T+1$ rows of $A'$ contain a $(T+1)\times(T+1)$
submatrix with nonzero permanent (in the last $T$ columns, together
with any column for which there is a nonzero entry in the $(T+1)$-th
row). So, conditional on revealed information, $A'$ has the form in \cref{def:A}, with $n-s$ random rows (and $T+1$ fixed rows), so the conditional probability that $\on{per}(A')=0$ is at most $f_p(n-s)$. This shows
\[
\Pr\big[\on{per}(A^*)=0\big]=\Pr\big[\on{per}(A')=0\big]\le \Pr\big[\text{the }(T+1)\text{-th row of }A'\text{ is all-zero}\big]+f_p(n-s).
\]
Now, note that the $(T+1)$-th row of $A'$ is all-zero if and only if all entries of the $(n+T-s+1)$-th row of $A^*$ are zero. The probability of this is at most $p^{k}\le p^{\lfloor \nu n/2\rfloor}=\exp(-\Omega_{p,\alpha}(n))$ (since $Q[\xi_i-\xi'_i]\le Q[\xi_i]\le p$ for every $i\in K$). 
Thus, we obtain
\[\Pr\big[\on{per}(A^*)=0\big]\le f_p(n-s)+\exp(-\Omega_{p,\alpha}(n)),\]
and together with \eqref{eq:summary-step-3} we can conclude
\[\Pr\Big[\overline{\mathcal{E}_{z}(s-1,\alpha/(4s))}\cap \mc E(s,\alpha)\Big]\le \alpha p^{s}\cdot \big(f_p(n-s)+\exp(-\Omega_{\alpha,p}(n))\big).\qedhere\]
\end{proof}
Now, we deduce \cref{thm:inductive}.
\begin{proof}[Proof of \cref{thm:inductive}]
Recalling that the event $\mathcal{E}(n)$ holds deterministically, we can observe that
\begin{equation}
\Pr[\on{per}(A)=z]=\Pr\Big[\overline{\mathcal{E}_{z}(0)}\Big]=\Pr\Big[\overline{\mathcal{E}_{z}(0)}\cap \mathcal{E}(n)\Big]\le\Pr\Big[\overline{\mathcal{E}_{z}(0)}\cap\mathcal{E}(t)\Big]+\sum_{s=t+1}^{n}\Pr\Big[\overline{\mathcal{E}(s-1)}\cap\mathcal{E}(s)\Big].\label{eq:permanent-event-breakdown}
\end{equation}
We can further break down the above terms, as follows. Let $\alpha=\alpha_t=\eps/3$, and for all $1\le s\le t$ let $\alpha_{s-1}=\alpha_s/(4s)$. Then for $s=t+1,\dots,n$, we have
\begin{equation}
\Pr\Big[\overline{\mathcal{E}(s-1)}\cap\mathcal{E}(s)\Big]  =\Pr\Big[\overline{\mathcal{E}(s-1)}\cap(\mathcal{E}(s)\setminus\mathcal{E}(s,\alpha))\Big]+\Pr\Big[\overline{\mathcal{E}(s-1)}\cap\mathcal{E}(s,\alpha)\Big],\label{eq:row-event-breakdown}\\
\end{equation}
and furthermore
\begin{align}
\Pr\Big[\overline{\mathcal{E}_{z}(0)}\cap\mathcal{E}(t)\Big]&\le\Pr\Big[\overline{\mathcal{E}_{z}(0)}\cap(\mathcal{E}(t)\backslash\mathcal{E}(t,\alpha))\Big] +\Pr\Big[\overline{\mathcal{E}_{z}(0)}\cap \mathcal{E}(t,\alpha)\Big]\notag\\
&\le \Pr\Big[\overline{\mathcal{E}_{z}(0)}\cap(\mathcal{E}(t)\backslash\mathcal{E}(t,\alpha))\Big]\!+\!\Pr\Big[\overline{\mathcal{E}_z(0)}\cap\mathcal{E}(1,\alpha_1)\Big]\!+\!\sum_{s=2}^t \Pr\Big[\overline{\mathcal{E}(s\!-\!1,\alpha_{s\!-\!1})}\cap\mathcal{E}(s,\alpha_s)\Big]
\label{eq:last-rows-breakdown}
\end{align}
holds. We can directly substitute \cref{lem:hard-bound} into some of the
terms in \cref{eq:row-event-breakdown} and \cref{eq:last-rows-breakdown}. To bound the other terms, let us write $\sup_{\mathcal{E}}\Pr[\mathcal{F}\,|\,A^{\uparrow s}]$
and $\inf_{\mathcal{E}}\Pr[\mathcal{F}\,|\,A^{\uparrow s}]$ to
indicate a supremum or infimum over all outcomes of $A^{\uparrow s}$ satisfying an event $\mathcal{E}$. Then, by \cref{lem:easy-bound,lem:markov}, for $s=t+1,\dots,n$ we obtain
\begin{align}
\Pr\Big[\overline{\mathcal{E}(s-1)}\cap(\mathcal{E}(s)\setminus\mathcal{E}(s,\alpha))\Big] & \le\Pr\Big[\overline{\mathcal{E}(s,\alpha)}\Big]\cdot\Pr\Big[\overline{\mathcal{E}(s-1)}\,\Big|\,\mathcal{E}(s)\setminus\mathcal{E}(s,\alpha)\Big]\nonumber\\
&\le \Pr\Big[\overline{\mathcal{E}(s,\alpha)}\Big]\cdot\sup_{\mathcal{E}(s)}\Pr\Big[\overline{\mathcal{E}(s-1)}\,\Big|\,A^{\uparrow s}\Big]  \le(1+2\alpha)f_p(n-s)\cdot p^{s},\label{eq:sup-inequality}
\end{align}
as well as (recalling the definition of $\tau_p$ in \cref{eq:def:tau-p})
\begin{align}
\Pr\Big[\overline{\mathcal{E}_{z}(0)}\cap(\mathcal{E}(t)\backslash\mathcal{E}(t,\alpha))\Big] & \le\Pr\Big[\overline{\mathcal{E}(t,\alpha)}\Big]\cdot\Bigg(1-\Pr\Big[\mathcal{E}_z(0)\,\Big|\,\mathcal{E}(t)\setminus\mathcal{E}(t,\alpha)\Big]\Bigg)\nonumber \\
&\le\Pr\Big[\overline{\mathcal{E}(t,\alpha)}\Big]\cdot\Bigg(1-\inf_{\mathcal{E}(1)}\Pr\Big[\mathcal{E}_z(0)\,\Big|\,A^{\uparrow s}\Big]\cdot \prod_{s=2}^t\inf_{\mathcal{E}(s)}\Pr\Big[\mathcal{E}(s-1)\,\Big|\,A^{\uparrow s}\Big]\Bigg)\nonumber \\
 & \le(1+2\alpha)f_p(n-t)\cdot\Bigg(1-\prod_{s=1}^t(1-p^s)\Bigg)\le(1+2\alpha)\tau_p\cdot f_p(n-t)\label{eq:sup-inequality-2}.
\end{align}
Substituting \cref{lem:hard-bound} and \cref{eq:sup-inequality} into \cref{eq:row-event-breakdown}, for $s=t+1,\dots,n$, we obtain
\[
\Pr\Big[\overline{\mathcal{E}(s-1)}\cap\mathcal{E}(s)\Big]\le(1+3\alpha)p^sf_p(n-s)+\exp(-\Omega_{\eps,p}(n)),
\]
and substituting \cref{lem:hard-bound} and \cref{eq:sup-inequality-2} into \cref{eq:last-rows-breakdown}, we obtain
\[
\Pr\Big[\overline{\mathcal{E}_{z}(0)}\cap\mathcal{E}(t)\Big]\le(1+2\alpha)\tau_p \cdot f_p(n-t)+\sum_{s=1}^t\alpha p^s f_p(n-s)+\exp(-\Omega_{\eps,p,t}(n)).
\]
Substituting the above two estimates into \cref{eq:permanent-event-breakdown} yields
\[\Pr[\on{per}(A)=z]\le (1+2\alpha)\tau_p\cdot f_p(n-t)+\alpha\sum_{s=1}^t p^s f_p(n-s)+(1+3\alpha)\sum_{s=t+1}^n p^s f_p(n-s)+\exp(-\Omega_{\eps,p,t}(n)),\]
as desired (recalling $\alpha=\eps/3$).
\end{proof}

\bibliographystyle{amsplain_initials_nobysame_nomr}
\bibliography{main}

\end{document}